\newcommand{\scott}[1]{{\scriptsize\ \color{orange}\textbf{Scott's note:} #1 \color{black}\ \normalsize}}
\newcommand{\R}{\mathbb{R}}
\renewcommand{\epsilon}{\varepsilon}
\renewcommand{\phi}{\varphi}
\renewcommand{\hat}{\widehat}
\renewcommand{\tilde}{\widetilde}
\newcommand{\supp}{\mathbf{supp}}
\newcommand{\reg}{\text{reg}}
\newtheorem{theorem}{Theorem}[section]
\newtheorem{proposition}[theorem]{Proposition}
\newtheorem{lemma}[theorem]{Lemma}
\newtheorem{corollary}[theorem]{Corollary}
\theoremstyle{definition}
\newtheorem{definition}[theorem]{Definition}
\newtheorem*{question}{Question}
\theoremstyle{remark}
\newtheorem{remark}[theorem]{Remark}
\numberwithin{equation}{section}
\newcommand{\diam}{\mathop\mathrm{diam}\nolimits}
\author{Scott Zimmerman}
\title{Singular integrals on $C_{w^*}^{1,\alpha}$ regular curves in Banach duals}
\keywords{Singular integral operator; Banach space; $w^*$-derivative; good lambda}
\begin{document}

\maketitle

\begin{abstract}
The modern study of singular integral operators on curves in the plane began in the 1970's. Since then, there has been a vast array of work done on the boundedness of singular integral operators defined on lower dimensional sets in Euclidean spaces. In recent years, mathematicians have attempted to push these results into a more general metric setting particularly in the case of singular integral operators defined on curves and graphs in Carnot groups.

Suppose $X = Y^*$ for a separable Banach space $Y$. Any separable metric space can be isometrically embedded in such a Banach space via the Kuratowski embedding. Suppose $\Gamma = \gamma([a,b])$ is a curve in $X$ whose $w^*$-derivative is H\"{o}lder continuous and bounded away from 0.
We prove that any convolution type singular integral operator associated with a 1-dimensional Calder\'{o}n-Zygmund kernel which is uniformly $L^2$-bounded on lines is $L^p$-bounded along $\Gamma$.

We also prove a version of David's ``good lambda'' theorem for upper regular measures on doubling metric spaces.
\end{abstract}

\tableofcontents

\section{Introduction}

This paper will be concerned with singular integral operators (SIOs) generated by 1-dimensional Calder\'{o}n-Zygmund kernels in the non-Euclidean setting.
For all relevant definitions, see Section~\ref{sec:SIO}.

The modern study of SIOs on curves
began with the works of Calder\'{o}n \cite{Calderon} and 
Coifman, McIntosh, and Meyer \cite{CMM}
on the $L^2$ boundedness of the Cauchy integral operator in the complex plane.
David \cite{David1984} 
and
Mattila, Melnikov, and Verdera \cite{MatMelVer}
classified the regular sets $E$ 
for which the Cauchy operator is bounded in $L^2(E)$ as those sets which are contained in a regular curve.
A curve $\Gamma$ in a metric space $X$ is {\em regular} if the length of the curve within a ball is comparable to the radius of the ball. See \eqref{d-regular}.
A thorough discussion of bounded SIOs on lower dimensional sets and their applications can be found in Tolsa's 2014 book \cite{tolsabook}.

Work on low dimensional SIOs in the non-Euclidean setting
has attracted recent interest.
In particular, there has been progress regarding the properties of SIOs on lower dimensional sets in Carnot groups \cite{CFOsios,ChoFasOrp,ChoLi,SIOCLZ,ChoMat,FasOrpReg}.
For example, any kernel in the Heisenberg group which is uniformly $L^2$-bounded on vertical planes corresponds to a SIO which is bounded on any 3-regular $C^{1,\alpha}$ intrinsic Lipschitz graph with compact support \cite{CFOsios}.
Further, any kernel in an arbitrary Carnot group which is uniformly $L^2$-bounded on (horizontal) lines (a property which we will call UBL)
corresponds to a SIO which is bounded along any $C^{1,\alpha}$ regular curve \cite{SIOCLZ}.
Recently, 
F\"{a}ssler and Orponen \cite{FasOrpReg}
questioned if every UBL kernel in the Heisenberg group generates a SIO which is bounded along regular curves, and they provided some evidence toward a positive answer.

Ideally, one would like to 
fully understand the behavior of SIOs and their relationship to the geometry of a curve in a general metric setting.
As a first step,
we will consider SIOs generated by kernels 
in any Banach space $X$ which is the dual of a separable Banach space $Y$ i.e. $X = Y^*$.
The inspiration for this decision was the famous Kuratowski embedding \cite{Kurat}:
any separable metric space $M$ can be embedded isometrically into $X = \ell^\infty$,
and $\ell^\infty$ is the dual of the separable Banach space $Y = \ell^1$.
In particular, we may embed a curve in any metric space isometrically into $\ell^\infty$.
Therefore, an exploration of the theory of SIOs on curves in such Banach spaces is a reasonable first step toward an understanding of
SIOs in arbitrary metric spaces.

This paper will focus on the main result from \cite{SIOCLZ} mentioned above
(which addressed SIOs along $C^{1,\alpha}$ regular curves in Carnot groups)
in the Banach space setting.
The proofs in \cite{SIOCLZ} strongly relied on the smoothness of the underlying curve.
As our end goal is the study of SIOs in arbitrary metric spaces,
it seems natural to consider curves which are smooth with respect to 
the {\em metric derivative} defined as follows for a curve $\gamma:(a,b) \to X$: 
$$
|\dot{\gamma}|(x) = \lim_{h \to 0} \frac{d(\gamma(x+h),\gamma(x))}{|h|}.
$$
Unfortunately for us, 
the metric derivative computes only the speed of a curve.
It provides no directional information,
and such information is essential for the understanding of SIOs via linear approximations.

Another notion of differentiability
which encodes both speed and direction will be necessary
for the purposes of this paper.
Suppose $\Omega \subset \mathbb{R}^n$ is open and $X = Y^*$ for a separable Banach space $Y$.
Following the examples of \cite{AmbKir,HeiMan},
we say that a map 
$f:\Omega \to X$ is {\em $w^*$-differentiable} at $x \in \Omega$
if there is a linear map $D^*f_x:\mathbb{R}^n \to X$ such that
$$
\lim_{z \to x}
\left\langle \frac{f(z)-f(x)-D^*f_x(z-x)}{|z-x|},y \right\rangle = 0
\quad 
\text{for all } y \in Y.
$$

In the language of curves, 
this means that $\gamma:[a,b] \to X$
is {\em $w^*$-differentiable} at $t \in [a,b]$
if there is a point $\gamma'(t) \in X$ such that
$$
\tfrac{d}{ds} \langle \gamma(s),y \rangle|_{s=t} = \langle \gamma'(t),y \rangle
\quad
\text{ for all } y \in Y.
$$
We say that $\gamma$ is $C_{w^*}^{1,\alpha}$ 
if $\gamma'(t)$ exists for every $t \in [a,b]$ and $\gamma'$ is $\alpha$-H\"{o}lder continuous.

With this notion of differentiability in hand, 
we are now ready to state the main result of the paper.

\begin{theorem}
\label{t-main}
Fix $\alpha \in (0,1]$ and $p \in (1,\infty)$.
Suppose $X$ is the dual of a separable Banach space
and $\Gamma$ is a $C_{w^*}^{1,\alpha}$ regular curve in $X$.
Then any 
convolution type
 singular integral operator corresponding to a 1-dimensional CZ kernel which is uniformly $L^2$-bounded on lines
 is $L^p(\mathcal{H}^1|_E)$-bounded for any regular set $E \subset \Gamma$.
\end{theorem}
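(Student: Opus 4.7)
The plan is to follow a two-step strategy: reduce $L^p$-boundedness to a uniform $L^2$-type estimate via the good-lambda theorem advertised in the abstract, and then prove that $L^2$ estimate by approximating $\Gamma$ locally by tangent lines and exploiting the UBL hypothesis. Concretely, the good-lambda machinery, applied to the upper $1$-regular measure $\mathcal{H}^1|_E$ on the doubling metric space $\Gamma$, should reduce the $L^p(\mathcal{H}^1|_E)$-boundedness of the truncated SIOs $T_\epsilon$ to the following localized $L^2$ bound: for every ball $B \subset X$ centered on $\Gamma$ and every $\epsilon>0$,
\[
\int_{B\cap \Gamma} |T_\epsilon \chi_{B\cap \Gamma}|^2 \, d\mathcal{H}^1 \;\lesssim\; \mathcal{H}^1(B\cap \Gamma),
\]
with a constant independent of $B$ and $\epsilon$. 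Passing from this to the full $L^p(\mathcal{H}^1|_E)$-boundedness on any regular $E\subset \Gamma$ is then handled by the good-lambda theorem together with a standard Cotlar-type inequality controlling the maximal truncation.

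To prove the localized $L^2$ bound, I would reparametrize $\Gamma$ by arclength (valid since $\gamma'$ is bounded away from $0$), then for each short arc centered at some $t_0$ introduce the tangent line $\ell_{t_0} = \{\gamma(t_0) + s\gamma'(t_0) : s \in \mathbb{R}\}$. The key analytic input, extracted from $C^{1,\alpha}_{w^*}$-regularity, is the norm-Taylor expansion
\[
\bigl\|\gamma(t_0+s) - \gamma(t_0) - s\gamma'(t_0)\bigr\|_X \;\leq\; C|s|^{1+\alpha},
\]
obtained by writing the difference as a weak$^*$ Pettis integral of $\gamma'(t_0+u) - \gamma'(t_0)$ and using the norm in $X=Y^*$ as a supremum of weak$^*$ pairings. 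Together with regularity of $\Gamma$, this yields $\|\gamma(s)-\gamma(t)\| \simeq |s-t|$ on short arcs, so the arclength parametrizations of $\Gamma$ and $\ell_{t_0}$ near $t_0$ are bi-Lipschitz with constants tending to $1$.

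With this in place, one decomposes $T_\epsilon f$ on the arc as the SIO along the tangent line $\ell_{t_0}$ (pulled back via the obvious projection) plus an error term whose kernel is $K(\gamma(s),\gamma(t)) - K(\gamma(t_0)+s\gamma'(t_0),\gamma(t_0)+t\gamma'(t_0))$. The line operator is controlled in $L^2$ by the UBL hypothesis with a uniform constant, and the $C^{1,\alpha}_{w^*}$-Taylor bound together with the Hölder regularity of the CZ kernel $K$ (with exponent $\beta$) shows the error kernel is bounded by $C|s-t|^{-1+\min(\alpha,\beta)}$, which is integrable on the arc and thus handled by Schur's test. Summing the local contributions via a Whitney-type decomposition of $\Gamma$ and a standard almost-orthogonality argument to combine different scales gives the desired localized $L^2$ bound.

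The main obstacle I anticipate is the error-term analysis in the Banach-space setting: one must justify comparing kernels at points that live on two distinct curves in $X$, which requires the metric Taylor bound above plus verifying that the projection $\gamma(t_0)+s\gamma'(t_0) \mapsto \gamma(t_0+s)$ is close enough to an isometry to preserve the CZ kernel estimates. A secondary difficulty is combining the tangent-line estimates across different base points $t_0$ without accumulating error from the Hölder exponent; this is where the good-lambda reduction is essential, since it lets us avoid ever comparing distant pieces of $\Gamma$ to a single line.
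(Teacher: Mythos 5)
Your proposal follows essentially the same strategy as the paper: a good-lambda reduction of $L^p$-boundedness to a local estimate, followed by tangent-line approximation using the $C^{1,\alpha}_{w^*}$ Taylor bound and the UBL hypothesis. The flatness estimate you describe—writing $\gamma(t_0+s)-\gamma(t_0)-s\gamma'(t_0)$ as a $w^*$ Pettis integral of $\gamma'(t_0+u)-\gamma'(t_0)$ and computing the dual norm as a supremum over the predual unit ball—is exactly Lemma~\ref{l-flat}.

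However, a few details deserve attention, and one of them matters. The paper's good-lambda theorem (Theorem~\ref{t:goodlambda}, invoked as Lemma~\ref{t-goodl}) only requires a weak $(1,1)$ estimate on a compact set $G$ carrying a fixed fraction $\theta$ of $\mathcal{H}^1(B\cap\Gamma)$, \emph{not} on all of $B\cap\Gamma$. The paper exploits this freedom crucially: Lemma~\ref{l-chooseG} takes $G=\gamma([a,b])$ for a single short interval on which $\gamma$ is uniformly bi-Lipschitz (Proposition~\ref{p-bilip}), so the Christ-cube/$T1$-type argument (Proposition~\ref{p-L2goal}) compares exactly one arc to one tangent line. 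Your localized $L^2$ bound on the whole of $B\cap\Gamma$ would require handling cross terms between disjoint arcs of $\Gamma$ that pass near each other inside $B$, which is a genuine extra complication that the choice of $G$ is designed to avoid; as stated, your reduction is asking for more than the good-lambda actually needs and more than the local tangent-line argument actually delivers. Two smaller corrections: the good-lambda theorem is stated for measures with unbounded support, so the paper first appends a ray $\ell$ to $\Gamma$ and uses UBL together with Lemmas~\ref{t-L2toWeak} and~\ref{l-glue} to treat the ray; and the error kernel exponent should be $\alpha\beta$, not $\min(\alpha,\beta)$, since the CZ H\"{o}lder bound gives $\|\gamma(t)-L(t)\|^\beta/\|\gamma(t)-x\|^{1+\beta}\lesssim |t-t_0|^{(1+\alpha)\beta-(1+\beta)}=|t-t_0|^{\alpha\beta-1}$—still integrable, so the conclusion survives. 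Finally, the paper routes the weak $(1,1)$ bound on $G$ through the Nazarov--Treil--Volberg theorem (Lemma~\ref{t-L2toWeak}) rather than a separate Cotlar inequality, but those play the same role.
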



The statement and proof of this result are 
largely similar to those of the main theorem in \cite{SIOCLZ}.
The main differences (which will be addressed in this paper) are the preliminary properties of $w^*$-smooth curves (as described in Section~\ref{s-weakstar}) and the ``good lambda'' result (Theorem~\ref{t:goodlambda}).  Moreover, the examples of CZ-kernels in Section~\ref{sec:SIO} are original in the Banach space setting.
Note that Theorem~\ref{t-main} is not advertised as a generalization of the main result of \cite{SIOCLZ}. I am not currently aware of an embedding of a curve in a Carnot group into $\ell^{\infty}$ through which the result in \cite{SIOCLZ} follows from Theorem~\ref{t-main}.
Rather, Theorem~\ref{t-main} is simply of independent interest in the Banach space setting.

Note that, since we are considering Banach spaces, one may suppose that a more appropriate choice of derivative is the 
Fr\'{e}chet derivative. 
However, Theorem~\ref{t-main} would in fact be weakened if $\Gamma$ was assumed to be $C^{1,\alpha}$ with respect to the Fr\'{e}chet derivative rather than the $w^*$-derivative. 
Indeed, if the Fr\'{e}chet derivative $D \gamma_x$ of a curve $\gamma$ exists at a point $x$, then $\gamma$ is also $w^*$-differentiable at $x$ with $\gamma'(x) = D\gamma_x$.
These notions of differentiability are distinct in general.
For example, as noted in the introduction of \cite{AmbKir}, the curve $t \mapsto \chi_{(0,t)}$
in $(C[0,1])^*$
is $w^*$-differentiable 
with $w^*$-derivative $\delta_t$,
but it is not Fr\'{e}chet differentiable.

An important (and standard) part of the proof of Theorem~\ref{t-main} 
is the ``big pieces'' theorem of David \cite{MR956767}
which utilizes a standard ``good $\lambda$'' inequality. 
The theorem allows us reduce our attention from the entire set $\Gamma$ to a suitably large subset of any ball.
The original result of David was stated for regular measures in $\mathbb{R}^n$.
Section~\ref{s-bigpiece} below
contains a general statement and proof of a ``good lambda'' result
for upper regular measures (which are not necessarily doubling) on doubling metric spaces
similar to a theorem of Tolsa \cite[Theorem 2.22]{tolsabook}.
See Theorem~\ref{t:goodlambda}.

The paper is organized as follows.
In Section~\ref{sec:SIO},
we will see the definitions of Calder\'{o}n-Zygmund (CZ) kernels, singular integral operators, and their boundedness.
We will also see examples of CZ kernels in our Banach space setting.
In Section~\ref{s-weakstar}, we will explore the notion of $w^*$-differentiability along curves and establish important properties of $w^*$-smooth curves.
An outline of the proof of Theorem~\ref{t-main} is presented in Section~\ref{sec-proof}.
Finally, as discussed above, the proof of a ``good lambda'' result for upper regular measures on doubling metric spaces is given in Section~\ref{s-bigpiece}.

\section{Singular integral operators in Banach spaces}
\label{sec:SIO}

Suppose $M$ is a metric space with metric $d$.
For $x \in M$ and $r > 0$,
we will use $B(x,r)$ to denote the open ball
$$
B(x,r) := \{ y \in M : d(x,y) < r \}.
$$
A Radon measure $\mu$ on $M$ is called
1-{\em Ahlfors-David-regular} (or simply {\em regular})
if there is a constant $C \geq 1$ such that
\begin{equation}
    \label{d-regular}
C^{-1}r \leq \mu(B(x,r)) \leq Cr
\quad
\text{for all } x \in X
\text{ and }
0 < r \leq \diam(\text{supp}(\mu)).
\end{equation}
If only the upper inequality holds, we say that $\mu$ is {\em upper regular}. 
A closed set $E \subset X$ 
is called {\em regular}
if 
the restriction of the 1-dimensional Hausdorff measure to $E$ (denoted $\mathcal{H}^1|_E$) is regular.
We will denote the constant $C$
associated with a measure $\mu$ or set $E$ by $\reg(\mu)$ or $\reg(E)$
respectively.

An {\em n-dimensional Calder\'{o}n-Zygmund (CZ) kernel} 
$K$ on a metric space $M$ is a continuous function
$K:(M \times M) \setminus \Delta \to \mathbb{R}$
where $\Delta = \{ (x,x) \, : \, x \in M \}$
satisfying the following for some $C>0$ and $\beta \in (0,1]$:
\begin{equation}
\label{e-growth-metric}
|K(x,y)| \leq \frac{C}{d(x,y)^n}
\end{equation}
and
\begin{equation}
\label{e-growth-holder}
|K(x,y)-K(x',y)| 
+ 
|K(y,x) - K(y,x')|
\leq C\frac{d(x,x')^\beta}{d(x,y)^{n+\beta}}
\end{equation}
for all $x,x',y \in M$
with
$d(x,x') \leq d(x,y)/2$.

\subsection{Singular integral operators}
Suppose $K$ is a 1-dimensional CZ kernel 
and $\mu$ is an upper regular measure on a metric space $M$. 
For each $\varepsilon>0$, define the {\em truncated singular integral operator} $T_{\mu,\varepsilon}$ {\em associated with $K$} as
$$
T_{\mu,\varepsilon}f(x)
=
\int_{\Vert x-y\Vert >\varepsilon}
K(x,y)f(y)\, d\mu(y)
\quad
\text{for any } f \in L^p(\mu), \, 1<p<\infty.
$$
The associated {\em maximal singular integral} $T_{\mu,*}$ is defined as
$$
T_{\mu,*} f(x) = \sup_{\varepsilon > 0} \left| T_{\mu,\varepsilon}f(x) \right|
\quad
\text{for any } f \in L^p(\mu), \, 1<p<\infty.
$$
If $\mu$ is further assumed to be regular,
we define the symbol
$$
\Vert T_\mu \Vert_{L^p(\mu) \to L^p(\mu)} := \inf\{ M > 0 \, : \, \Vert T_{\mu,\varepsilon}f\Vert_{L^p(\mu)}
\leq M \Vert f \Vert_{L^p(\mu)}
\text{ for }
f\in L^p(\mu), \, \varepsilon > 0\}
$$
for each $p \in (1,\infty)$.
We say that {\em the singular integral operator $T_\mu$ associated with $K$ is bounded on $L^p(\mu)$} 
if 
$\Vert T_\mu \Vert_{L^p(\mu) \to L^p(\mu)}< \infty$.
In other words, $T_\mu$ is bounded if the operators $T_{\mu,\varepsilon}$ are bounded uniformly on $L^p(\mu)$ for all $\varepsilon >0$.

\subsection{CZ kernels in Banach spaces}
Suppose $X$ is a Banach space with norm $\Vert \cdot \Vert$.
Following the above definitions, we say that
a continuous function $K:X \setminus \{0\} \to \mathbb{R}$
is a 1-dimensional CZ kernel if the function
$K':(X \times X)\setminus \Delta \to \R$
defined as
$K'(x,y) = K(x-y)$ is a 1-dimensional CZ kernel.
In other words, 
there are constants $B>0$ and $\beta \in (0,1]$ satisfying the growth condition
\begin{equation}
    \label{e-growth}
    |K(x)| \leq \frac{B}{\Vert x \Vert}
    \quad
    \text{ for all } x \in X\setminus \{ 0 \},
\end{equation}
and the H\"{o}lder continuity condition 
\begin{equation}
    \label{e-holder}
|K(x) - K(x + h)| 
\leq 
B\frac{\Vert h\Vert^\beta}{\Vert x\Vert^{1+\beta}}
\quad \text{ for all } x,h \in X \text{ with } \Vert h \Vert \leq \tfrac12 \Vert x \Vert.
\end{equation}
We define truncated and maximal singular integral operators $T_{\mu,\varepsilon}$ and $T_{\mu,*}$ associated with $K$ to be the respective operators associated with $K'$,
and we say that the singular integral operator $T_\mu$ associated with $K$ is bounded on $L^p(\mu)$
if the operators $T_{\mu,\varepsilon}$ associated with $K'$ are bounded uniformly on $L^p(\mu)$ for all $\varepsilon >0$.


\subsection{Examples of CZ kernels in Banach spaces} \label{ss-examples}
{\em (1)} Suppose $X$ is a Banach space with a $C^1$ norm $\Vert \cdot \Vert$
i.e. the norm is Fr\'{e}chet differentiable on $X \setminus \{0\}$ 
and this Fr\'{e}chet derivative is continuous.
Examples of such Banach spaces are Hilbert spaces and $L^p(\Omega)$ for any metric measure space $\Omega$ with $p > 1$ \cite{BonFram}.
Suppose also that $X$ has
a normalized Schauder basis $(e_j)_{j \in \mathbb{N}}$.

Define the {\em 1-dimensional Banach-Riesz kernels} $R_n$ on $X$ as
$$
R_n(x) = \frac{x_n}{\Vert x \Vert^2}
\quad
\text{for } x \neq 0, \, n \in \mathbb{N}
$$
where $x = \sum_{n=1}^\infty x_n e_n$.

We will now check that each $R_n$
is a 1-dimensional CZ kernel. 
It follows from the open mapping theorem that the basis projections $P_n(x) := \sum_{j=1}^n x_je_j$ are uniformly bounded by a constant $C > 0$
\cite[Theorem 4.10]{BanachBook}.
Thus
$$
|x_n| = \Vert x_n e_n \Vert = \Vert P_n(x) - P_{n-1}(x)\Vert \leq 2C \Vert x \Vert.
$$
That is, $R_n$ satisfies \eqref{e-growth} uniformly in $n$. 
Now, according to the mean value theorem, 
for any $z$ and $x$ with $\Vert z \Vert = 1$ and $\Vert x \Vert \leq \tfrac12$
there is a point $c$ on the segment from $z$ to $z-x$ 
and a constant $C'>0$ depending only on the norm on $X$
such that
\begin{equation}
\label{e-rieszholder}
|R_n(z) - R_n(z-x)| = \Vert DR_n(c) \Vert \cdot \Vert x \Vert \leq C' \Vert x \Vert
\end{equation}
(where $DR_n$ is the Frech\'{e}t derivative of $R_n$)
since $\tfrac12 \leq \Vert z-x \Vert \leq \tfrac32$ and $R_n$ is $C^1$ away from 0.
Since
$R_n$ is $-1$-homogeneous (i.e. $R_n(rx)=r^{-1}R_n(x)$),
the H\"{o}lder condition \eqref{e-holder} follows 
(uniformly in $n$)
from \eqref{e-rieszholder} with $\beta =1$, and hence each $R_n$ 
is a 1-dimensional CZ-kernel.

{\em (2)} Assume that $X = Y^*$ where $Y$ is separable,
and again assume that $X$ has a $C^1$ norm $\Vert \cdot \Vert$.
Say $(y_n)_{n \in \mathbb{N}}$ is a dense set in the unit sphere of $Y$.
Define the {\em 1-dimensional dual-Riesz kernels} on $X$ as
$$
R^*_n(x) = \frac{\langle x,y_n\rangle}{\Vert x \Vert^2}
\quad
\text{for } x \neq 0, \, n \in \mathbb{N}
$$
where $\langle x , \cdot \rangle$ is evaluation of the functional $x$.
According to the Cauchy-Schwartz inequality, $R_n^*$ satisfies \eqref{e-growth} with the constant $B=1$,
and, as above, we may conclude that $R_n^*$ satisfies \eqref{e-holder}.
Thus each $R_n^*$ is a CZ kernel on $X$.

\subsection{The UBL condition and annular boundedness}
Suppose again that $X$ is a Banach space.
A continuous function $K : X \setminus \{ 0 \} \to \mathbb{R}$
satisfying the growth condition \eqref{e-growth}
is said to be {\em uniformly bounded on lines} (or {\em UBL})
if there is a constant $M>0$ such that
$$
\Vert T_{\mathcal{H}^1|_L} \Vert_{L^2(\mathcal{H}^1|_L) \to L^2(\mathcal{H}^1|_L)} \leq M < \infty
$$
for any line $L$ in $X$
where $T_{\mathcal{H}^1|_L}$
is the SIO associated with $K$.

As mentioned in the introduction,
F\"{a}ssler and Orponnen \cite{FasOrpReg} asked if every 1-dimensional CZ kernel in the Heisenberg group 
which is also UBL generates a SIO which is bounded on any regular curves.
Some progress was made in \cite{SIOCLZ} wherein it was shown that such kernels are bounded on $C^{1,\alpha}$
regular curves in any Carnot group.
It therefore seems natural to study this class of kernels in any metric space which possesses some sort of affine structure.

The proof of the following proposition is identical to that of \cite[Lemma 2.9]{CFOsios}.
Indeed, any line in $X$ is a scaled isometric copy of $\mathbb{R}$
which allows us to apply the same Fourier transform arguments in $\mathbb{R}$
as seen in \cite{CFOsios}.
Fix a smooth, even function $\psi:\mathbb{R} \to \mathbb{R}$
satisfying $\chi_{B(0,1/2)} \leq \psi \leq \chi_{B(0,2)}$, 
and define for any $r>0$ the radial functions $\psi_r :X \to \mathbb{R}$ as
$$
\psi_r(x) := \psi(r\Vert x \Vert) 
\quad 
\text{for all } x \in X.
$$

\begin{proposition}
\label{p-annular}
If $K$ is a 1-dimensional CZ kernel which is UBL,
then $K$ satisfies the annular boundedness condition.
That is, 
there is a constant $A \geq 1$
such that, for any line $L$,
\begin{equation}
\label{annular}
\left| \int_L [\psi_R(y) - \psi_r(y) ] K(y) \, d\mathcal{H}^1(y) \right| \leq A
\quad \text{for all } 0<r<R.
\end{equation}
\end{proposition}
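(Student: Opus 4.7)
The plan is to reduce \eqref{annular} to a one-dimensional Fourier argument on $\R$. Parametrize the line $L$ (which we may take to pass through the origin by translating and using the translation invariance built into the kernel's form) as $L = \{te : t \in \R\}$ for some unit vector $e \in X$, and set $k(t) := K(te)$ for $t \in \R \setminus \{0\}$. The growth condition \eqref{e-growth} yields $|k(t)| \leq B/|t|$ and the H\"older condition \eqref{e-holder} yields $|k(t) - k(t+h)| \leq B|h|^\beta/|t|^{1+\beta}$ for $|h| \leq |t|/2$, so $k$ is a classical 1-dimensional CZ kernel on $\R$. Along $L$ we have $\|te\| = |t|$, so the integral in \eqref{annular} becomes
$$
I_{r,R} := \int_\R \phi_{r,R}(t)\, k(t)\, dt, \qquad \phi_{r,R}(t) := \psi(R|t|) - \psi(r|t|).
$$

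Since $\psi$ is smooth, even, and satisfies $\chi_{B(0,1/2)} \leq \psi \leq \chi_{B(0,2)}$, the function $\phi_{r,R}$ lies in $C_c^\infty(\R)$ and vanishes on $|t| < 1/(2R)$ (where both $\psi(R|t|)$ and $\psi(r|t|)$ equal $1$) as well as on $|t| > 2/r$; in particular $I_{r,R}$ is absolutely convergent. The scaling identity $\widehat{\psi(r|\cdot|)}(\xi) = r^{-1} \widehat{\psi(|\cdot|)}(\xi/r)$ shows $\|\widehat{\psi(r|\cdot|)}\|_{L^1(\R)} = \|\widehat{\psi(|\cdot|)}\|_{L^1(\R)}$ is independent of $r$, so
$$
\|\widehat{\phi_{r,R}}\|_{L^1(\R)} \leq 2\|\widehat{\psi(|\cdot|)}\|_{L^1(\R)} =: C_\psi
$$
uniformly in $r$ and $R$.

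Next I would invoke the UBL hypothesis. After identifying $L \cong \R$ via the parametrization, the truncated SIO $T_{\mathcal{H}^1|_L,\varepsilon}$ is exactly convolution by $k_\varepsilon := k \chi_{\{|t|>\varepsilon\}}$, so the UBL assumption provides $M < \infty$ such that this convolution is $L^2(\R)$-bounded with norm at most $M$, uniformly in $\varepsilon$ and in the direction $e$. Since $|k_\varepsilon(t)| \leq B/|t|$ with support in $\{|t|>\varepsilon\}$, we have $k_\varepsilon \in L^2(\R)$, and Plancherel's theorem identifies the convolution operator norm with $\|\widehat{k_\varepsilon}\|_{L^\infty(\R)}$. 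Hence $\|\widehat{k_\varepsilon}\|_{L^\infty(\R)} \leq M$.

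Finally, choose any $\varepsilon < 1/(2R)$, so that $\phi_{r,R} k = \phi_{r,R} k_\varepsilon$ by the support properties of $\phi_{r,R}$. Then Parseval's identity (both factors lying in $L^2(\R)$) yields
$$
|I_{r,R}|
= \left| \int_\R \widehat{\phi_{r,R}}(\xi) \, \overline{\widehat{k_\varepsilon}(\xi)}\, d\xi \right|
\leq \|\widehat{k_\varepsilon}\|_{L^\infty} \, \|\widehat{\phi_{r,R}}\|_{L^1}
\leq M C_\psi,
$$
and setting $A := M C_\psi$ gives \eqref{annular}. The only delicate point is the identification of the $L^2$ convolution norm with $\|\widehat{k_\varepsilon}\|_{L^\infty}$ via Plancherel once $k_\varepsilon \in L^2$; this is standard 1D Fourier analysis and is exactly the content of the reference \cite[Lemma 2.9]{CFOsios}, so the reduction to $\R$ by parametrization is the only ingredient that needs to be spelled out in the Banach space setting.
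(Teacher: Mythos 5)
Your proof follows the same Fourier-transform route that the paper outsources to \cite[Lemma~2.9]{CFOsios}: reduce to one dimension via an arclength parametrization of $L$, control $\Vert \widehat{\phi_{r,R}}\Vert_{L^1}$ uniformly in $r,R$ by scaling invariance of the $L^1$-norm of the Fourier transform, deduce $\Vert\widehat{k_\varepsilon}\Vert_{L^\infty}\leq M$ from the UBL hypothesis and Plancherel, and finish with Parseval. That chain of estimates is correct: $k(t)=K(te)$ inherits \eqref{e-growth} and \eqref{e-holder} exactly as you say, $k_\varepsilon\in L^2$, and the truncated SIO on $L$ is honestly convolution by $k_\varepsilon$ under the parametrization, so the $L^2(L)$-bound is an $L^\infty$ bound on $\widehat{k_\varepsilon}$.

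The one step that does not hold up is the parenthetical claim that $L$ ``may be taken to pass through the origin by translating and using the translation invariance built into the kernel's form.'' The weights in \eqref{annular} are $\psi_R(y)=\psi(R\Vert y\Vert)$ and $\psi_r(y)=\psi(r\Vert y\Vert)$, which are centered at the origin and are \emph{not} translation invariant; the integrand $[\psi_R(y)-\psi_r(y)]K(y)$ involves $K$ at a single point $y$ (not a difference $x-y$), so the translation invariance of the two-variable kernel $K'(x,y)=K(x-y)$ gives no reduction. Translating a line $L$ that misses the origin therefore changes the integrand, and the Fourier argument does not directly apply to such lines. What you actually prove is the inequality for lines through the origin. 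This is the only case used in the proof of Theorem~\ref{t-main} (there the annular condition is applied to $x-L(t)$ with $x=\gamma(t_0)=L(t_0)$, which passes through $0$), and it is evidently how the condition is meant; so in practice your argument is complete and identical in spirit to the paper's, but the translation remark should be removed or replaced with the observation that only lines through the origin are needed.
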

The notion of annular boundedness has appeared in \cite{CFOsios} and \cite{SIOCLZ} in the setting of Carnot groups.
In fact, we could argue as in \cite[Proposition 2.16]{SIOCLZ} to show that the UBL and annular boundedness conditions are equivalent, but this will not be needed below.

\subsection{Weak (1,1) estimates}
We will now record two results which will be very useful in the proof of Theorem~\ref{t-main}.
Suppose $M$ is a metric space with Radon measure $\mu$.
A linear operator $T$ is said to be {\em of weak type $(1,1)$} if there is a constant $C>0$ such that
\begin{equation}
    \label{d:w11}
\mu(\{x \in M : |Tf(x)| > \lambda \})
\leq
\frac{C}{\lambda} \Vert f \Vert_{L^1(\mu)}
\quad
\text{for all }
f \in L^1(\mu).
\end{equation}
Our first result follows immediately from the theorem of Nazarov, Treil, and Volberg
(see Theorem~9.1 in \cite{NazTreVol}).
\begin{lemma}
\label{t-L2toWeak}
Suppose $M$ is a separable metric space and $\mu$ is an upper regular measure on $M$.
If $K$ is a 1-dimensional CZ kernel on $M$
such that the SIO $T_\mu$ associated with $K$ is bounded in $L^2(\mu)$,
then $T_{\mu,*}$ is of weak type $(1,1)$
where the constant in the definition \eqref{d:w11} depends only on $K$ and $\Vert T_\mu \Vert_{L^2(\mu) \to L^2(\mu)}$.
\end{lemma}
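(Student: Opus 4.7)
The plan is to reduce this lemma to the non-homogeneous Cotlar-type estimate of Nazarov, Treil, and Volberg \cite[Theorem 9.1]{NazTreVol} by verifying that each hypothesis is met verbatim in our setting.

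First I would check the kernel hypotheses: by assumption $K$ is a $1$-dimensional CZ kernel, so it satisfies the growth bound \eqref{e-growth-metric} and the H\"older regularity \eqref{e-growth-holder} with $n=1$ that are required on the kernel side in \cite{NazTreVol}. Second, the measure $\mu$ is assumed to be upper $1$-regular, giving $\mu(B(x,r)) \leq Cr$; this linear growth is exactly the (non-homogeneous) condition needed on the measure, and no doubling of $\mu$ is required. Separability of $M$ ensures that the countable covering and Calder\'on-Zygmund decomposition arguments on which the NTV theorem relies can be carried out in this metric setting.

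Finally, the $L^2(\mu)$-boundedness of $T_\mu$ means that $\|T_{\mu,\varepsilon}\|_{L^2(\mu)\to L^2(\mu)} \leq \|T_\mu\|_{L^2(\mu)\to L^2(\mu)}$ uniformly in $\varepsilon > 0$, which is the precise starting hypothesis of \cite[Theorem 9.1]{NazTreVol}. That theorem then produces a pointwise Cotlar-type bound controlling $T_{\mu,*}f$ by a maximal average of $|T_{\mu,\varepsilon}f|$ and $|f|$, from which the weak-$(1,1)$ conclusion for $T_{\mu,*}$ follows by the usual Marcinkiewicz decomposition. The resulting constant depends only on the CZ parameters $B$ and $\beta$ from \eqref{e-growth-metric}--\eqref{e-growth-holder}, the upper regularity constant of $\mu$, and $\|T_\mu\|_{L^2(\mu)\to L^2(\mu)}$, as claimed.

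The only real obstacle is bookkeeping: confirming that the separable metric / upper $1$-regular framework used here falls under the scope of the NTV statement, which is formulated in a sufficiently general non-homogeneous setting that this matching is immediate. No new estimates are needed beyond this verification.
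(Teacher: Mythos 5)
Your proposal is correct and matches the paper's approach exactly: the paper states that this lemma ``follows immediately from the theorem of Nazarov, Treil, and Volberg (see Theorem~9.1 in \cite{NazTreVol})'' and gives no further argument. Your verification that the kernel, growth, and $L^2$ hypotheses line up with the NTV framework is precisely the bookkeeping the paper leaves implicit.
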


For a Radon measure $\mu$ on a metric space $M$,
we define the maximal operator $M_\mu$ as
$$
M_\mu f(x) = \sup_{r > 0} \frac{1}{\mu(B(x,r))} \int_{B(x,r)} |f| \, d\mu
\quad
\text{for all }
f \in L^1(\mu),
\, x \in \supp(\mu).
$$
When $\mu$ is doubling, it is well known (see e.g. Theorem~2.2 in \cite{HeinBook})
that 
$M_\mu$ is of weak type $(1,1)$.
\begin{lemma}
\label{l-glue}
Suppose $K$ is a kernel 
and $\mu$ is a regular measure on a metric space $M$
with $\supp(\mu) = A \cup B$ where $B$ is bounded and $d(A,B) > \diam(B)$.
If there is a constant $c>0$ such that $f \in L^1(\mu)$ and $\lambda > 0$ satisfy
$$
\mu(\{ x \in A : T_{\mu|_A,*} f(x) > \lambda \} ) \leq \frac{c}{\lambda} \Vert f \Vert_{L^1(\mu)}
$$
and
$$
\mu(\{ x \in B : T_{\mu|_B,*} f(x) > \lambda \} ) \leq \frac{c}{\lambda} \Vert f \Vert_{L^1(\mu)},
$$
then there is a constant $C>0$ depending only on $c$, $K$, and $\mu$ such that
$$
\mu(\{ x \in M : T_{\mu,*} f(x) > 2\lambda \} ) \leq \frac{C}{\lambda} \Vert f \Vert_{L^1(\mu)}.
$$

In particular, if $T_{\mu|_A,*}$ and are both $T_{\mu|_B,*}$ of weak type $(1,1)$,
then $T_{\mu,*}$ is of weak type $(1,1)$.
\end{lemma}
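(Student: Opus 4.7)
The plan is to decompose $f = f\chi_A + f\chi_B$ and exploit the sublinearity
$$T_{\mu,*}f \leq T_{\mu,*}(f\chi_A) + T_{\mu,*}(f\chi_B),$$
which reduces the inequality at level $2\lambda$ to showing that each of the sets $\{T_{\mu,*}(f\chi_A) > \lambda\}$ and $\{T_{\mu,*}(f\chi_B) > \lambda\}$ has $\mu$-measure bounded by a multiple of $\Vert f\Vert_{L^1(\mu)}/\lambda$. I then intersect each of these sets with $A$ and with $B$ separately, yielding two ``diagonal'' and two ``cross'' contributions. The two diagonal parts are immediate: for $x \in A$ the truncated operator $T_{\mu,\varepsilon}(f\chi_A)(x)$ is an integral over $y \in A$ and therefore coincides with $T_{\mu|_A,\varepsilon}f(x)$, so the first hypothesis applies directly; the symmetric statement handles the $B$-on-$B$ term.

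The cross terms are where the geometric hypothesis $d(A,B) > \diam(B)$ enters, together with the CZ growth bound \eqref{e-growth}. Fixing $y_0 \in B$, I observe that for $x \in A$ and $y \in B$ one has $\Vert x-y_0\Vert \leq \Vert x-y\Vert + \diam(B) \leq 2\Vert x-y\Vert$ (since $\diam(B) < d(A,B) \leq \Vert x-y\Vert$), hence $\Vert x-y\Vert \geq \tfrac12 \Vert x-y_0\Vert$. This produces the pointwise bound
$$T_{\mu,*}(f\chi_B)(x) \leq \frac{C\Vert f\Vert_{L^1(\mu)}}{\Vert x-y_0\Vert},$$
so $\{x \in A: T_{\mu,*}(f\chi_B)(x) > \lambda\}$ is contained in a ball about $y_0$ of radius $\lesssim \Vert f\Vert_{L^1(\mu)}/\lambda$; upper regularity of $\mu$ then gives the desired weak-type bound. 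For the other cross term (on $B$, integrating against $f\chi_A$), the same triangle trick with a fixed $x_0 \in B$ yields $\Vert x-y\Vert \geq \tfrac12\Vert x_0-y\Vert$ for $y \in A$, and a dyadic decomposition of $A$ into annuli about $x_0$ summable as a geometric series gives the uniform bound $T_{\mu,*}(f\chi_A)(x) \leq C\Vert f\Vert_{L^1(\mu)}/d(A,B)$ for every $x \in B$. Thus the bad set is either empty or contained in $B$, and regularity gives $\mu(B) \leq \reg(\mu)\diam(B) < \reg(\mu)\,d(A,B)$, which combined with the upper constraint $\lambda < C\Vert f\Vert_{L^1(\mu)}/d(A,B)$ delivers the required estimate.

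Summing the four contributions proves the lemma with a constant depending on $c$, the CZ constant from \eqref{e-growth}, and $\reg(\mu)$. I expect the main technical subtlety to be the asymmetric treatment of the two cross terms: because $B$ is bounded while $A$ need not be, the $A$-on-$B$ cross term is controlled by a decaying pointwise bound followed by a Chebyshev-style ball reduction, whereas the $B$-on-$A$ cross term must instead be handled by a uniform pointwise bound together with the bounded measure of $B$. The separation condition $d(A,B) > \diam(B)$ is precisely what permits both approaches to close cleanly with weak-type constants.
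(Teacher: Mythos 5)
Your decomposition $T_{\mu,*}f \leq T_{\mu,*}(f\chi_A)+T_{\mu,*}(f\chi_B)$ is literally the same as the paper's $T_{\mu,*}f \leq T_{\mu|_A,*}f+T_{\mu|_B,*}f$, since $T_{\mu,\varepsilon}(f\chi_A)=T_{\mu|_A,\varepsilon}f$; so your four pieces are exactly the paper's $m_{A,1}$, $m_{A,2}$, $m_{B,1}$, $m_{B,2}$, and you handle three of them the same way the paper does. The diagonal pieces follow from the hypothesis, and your $B$-on-$A$ cross term uses a uniform pointwise bound followed by $\mu(B)\lesssim \reg(\mu)\diam(B)$. (The ``dyadic annuli about $x_0$'' you mention there are unnecessary: for $x\in B$, $y\in A$ one has $d(x,y)\geq d(A,B)>\diam(B)$, so $\int_A |f(y)|/d(x,y)\,d\mu(y)\leq \Vert f\Vert_{L^1(\mu)}/\diam(B)$ immediately, which is exactly the paper's estimate.)

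The one place where you genuinely depart from the paper is the $A$-on-$B$ cross term, the paper's $m_{A,2}$. The paper traps $B$ inside $B(x,2d(x,B))$ for $x\in A$, derives $T_{\mu|_B,*}f(x)\lesssim \reg(\mu)\,M_\mu f(x)$, and then applies the weak-$(1,1)$ estimate for the Hardy--Littlewood maximal operator $M_\mu$, which uses that $\mu$ is doubling. You instead fix $y_0\in B$, use the triangle inequality with $\diam(B)<d(A,B)$ to obtain $d(x,y)\geq \tfrac12 d(x,y_0)$ for $x\in A$, $y\in B$, and conclude $T_{\mu,*}(f\chi_B)(x)\leq 2C_1\Vert f\Vert_{L^1(\mu)}/d(x,y_0)$, where $C_1$ is the growth constant in \eqref{e-growth-metric}. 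The superlevel set $\{x\in A: T_{\mu,*}(f\chi_B)(x)>\lambda\}$ is therefore contained in the ball $B\bigl(y_0, 2C_1\Vert f\Vert_{L^1(\mu)}/\lambda\bigr)$, and its measure is controlled directly by upper regularity. This is correct and slightly more elementary than the paper's route: it bypasses the maximal function entirely, and for this piece it uses only the upper half of the regularity hypothesis rather than the doubling property. Both arguments yield constants depending only on $c$, the CZ growth constant, and $\reg(\mu)$, so the two proofs are equally effective.
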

\begin{proof}
Write $\mu_1 := \mu|_A$ and $\mu_2 := \mu|_B$.
Since $T_{\mu,*}f \leq T_{\mu_1,*}f + T_{\mu_2,*}f$, we can write
\begin{align}
    \mu(\{ x \in M &: T_{\mu,*} f(x) > 2\lambda \} ) \nonumber \\
    &=
    \mu(\{ x \in A : T_{\mu,*} f(x) > 2\lambda \} ) 
    +
    \mu(\{ x \in B : T_{\mu,*} f(x) > 2\lambda \} ) \nonumber \\
    &\leq
     \mu(\{ x \in A : T_{\mu_1,*} f(x) > \lambda \} ) +  \mu(\{ x \in A : T_{\mu_2,*} f(x) > \lambda \} ) \nonumber \\
     & \qquad + 
     \mu(\{ x \in B : T_{\mu_1,*} f(x) > \lambda \} ) + \mu(\{ x \in B : T_{\mu_2,*} f(x) > \lambda \} ) \nonumber \\
     &:= m_{A,1} + m_{A,2} + m_{B,1} + m_{B,2} \label{e-0}.
\end{align}
By assumption, we have
\begin{equation}
    \label{e-1}
    m_{A,1} \leq \frac{c}{\lambda} \Vert f \Vert_{L^1(\mu)}
    \quad
    \text{ and }
    \quad
    m_{B,2} \leq \frac{c}{\lambda} \Vert f \Vert_{L^1(\mu)}.
\end{equation}
To bound $m_{B,1}$, note that, for any $x \in B$ and $\varepsilon > 0$, \eqref{e-growth-metric} gives
a constant $C_1>0$ such that
\begin{align*}
    |T_{\mu_1,\varepsilon} f(x)| &= \left|\int_{d(x,y) > \varepsilon} K(x,y) f(y) d \mu_1(y)\right|
    \leq C_1 \int_{A} \frac{|f(y)|}{d(x,y)} d \mu(y).
\end{align*}
Since $d(A,B) > \diam(B)$ and $x \in B$, we have the bound
\begin{align*}
\int_{A} \frac{|f(y)|}{d(x,y)} d \mu(y)
    \leq 
    \int_{d(x,y) > \diam(B)}  \frac{|f(y)|}{d(x,y)} d \mu(y) 
    \leq \frac{\Vert f \Vert_{L^1(\mu)}}{\diam(B)}.
\end{align*}
Therefore, $T_{\mu_1,*} f(x) \leq \diam(B)^{-1} \Vert f \Vert_{L^1(\mu)}$, and so, since $\mu$ is regular,
\begin{align}
\label{e-2}
    m_{B,1} 
    \leq 
    \frac{1}{\lambda} \int_B T_{\mu_1,*} f \, d\mu
    \leq
    \frac{C_1}{\lambda} \cdot \frac{\Vert f \Vert_{L^1(\mu)} \mu(B)}{\diam(B)}
    \leq
     \frac{C_1 \reg(\mu)}{\lambda}  \Vert f \Vert_{L^1(\mu)}.
\end{align}
To bound $m_{A,2}$, note that, for any $x \in A$, we have $d(x,B) \geq d(A,B) > \diam(B)$, and so, for any $\varepsilon > 0$,
\begin{align*}
    |T_{\mu_2,\varepsilon} f(x)| 
    \leq C_1 \int_{B} \frac{|f(y)|}{d(x,y)} d \mu(y)
    &\leq \frac{C_1}{d(x,B)} \int_{B(x,2d(x,B))} |f| \, d \mu \\
&\leq
\frac{2C_1 \reg(\mu)}{\mu(B(x,2d(x,B)))} \int_{B(x,2d(x,B))} |f| \, d \mu\\
&\leq 2C_1 \reg(\mu) M_\mu f(x).
\end{align*}
Since $\mu$ is a doubling measure, there is a constant $C_2>0$ depending only on $\mu$ and $C_1$ such that
\begin{align}
\label{e-3}
    m_{A,2} 
    \leq 
    \mu \left( \left\{ x \in A : M_{\mu} f(x) > \lambda(2C_1\reg(\mu))^{-1} \right\} \right)
    \leq
    \frac{C_2}{\lambda} \Vert f \Vert_{L^1(\mu)}.
\end{align}
Combining \eqref{e-1}, \eqref{e-2}, and \eqref{e-3} with \eqref{e-0} completes the proof.
\end{proof}

\section{$w^*$-differentiability on curves}
\label{s-weakstar}

Suppose $X$ is a Banach space which is dual to a separable Banach space $Y$ i.e. $X = Y^*$.
Recall the definition of the $w^*$-derivative from the introduction \cite{AmbKir,HeiMan}.

\begin{definition}
Suppose $\Omega \subset \mathbb{R}^n$ is open.
A map 
$f: \mathbb{R}^n \supset \Omega \to X$ is {\em $w^*$-differentiable} at $x \in \Omega$
if there is a linear map $D^*f_x:\mathbb{R}^n \to X$ such that
$$
\lim_{z \to x}
\left\langle \frac{f(z)-f(x)-D^*f_x(z-x)}{|z-x|},y \right\rangle = 0
\quad 
\text{for all } y \in Y.
$$
\end{definition}
For a curve $\gamma:[a,b] \to X$,
this translates to the following: $\gamma'(x) \in X$ is the $w^*$-{\em derivative} of $\gamma$ at $x \in (a,b)$ if
$$
\lim_{h \to 0}
\left\langle \frac{\gamma(x+h)-\gamma(x)-h \gamma'(x)}{h},y \right\rangle = 0
\quad
\text{for every } y \in Y.
$$
In other words, $\frac{d}{ds} \langle \gamma(s),y \rangle|_{s=x} = \langle \gamma'(x),y\rangle$ for any $y \in Y$.
We define $\gamma'(a)$ and $\gamma'(b)$ via one sided limits.

\subsection{A change of variables formula for the $w^*$-derivative}

To begin our exploration of the $w^*$-derivative, we prove a change of variables formula for Lipschitz curves in $X$.

We first record the powerful theorem of Ambrosio and Kirchheim.
\begin{theorem}[{\cite[Theorem~3.5]{AmbKir}}]
A Lipschitz map $\gamma:\mathbb{R}^n \to X$
is $w^*$-differentiable a.e.,
metrically differentiable a.e., and, for a.e. $x \in \mathbb{R}^n$,
$$
\Vert D^*f_x(v) \Vert 
=
md \, f_x(v)
\quad
\text{for all }v \in \mathbb{R}^n
$$
where $md\, f_x$ is metric differential of $f$ at $x$.
\end{theorem}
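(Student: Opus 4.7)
The plan is to bootstrap the classical Rademacher theorem on $\mathbb{R}^n$-valued scalar maps by exploiting the dual structure $X = Y^*$ together with the separability of $Y$. First I would fix a countable set $\{y_k\}_{k \in \mathbb{N}}$ which is dense in the closed unit ball of $Y$. For each $k$, the scalar function $g_k(x) := \langle \gamma(x), y_k \rangle$ is Lipschitz on $\mathbb{R}^n$ with constant at most $\Lip(\gamma)$, hence classically differentiable outside some Lebesgue-null set $N_k \subset \mathbb{R}^n$. Setting $N := \bigcup_k N_k$, the union remains null, and at every $x \notin N$ all the gradients $\nabla g_k(x)$ simultaneously exist.

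Next, at each such $x$ I would build $D^*\gamma_x$ by duality. For fixed $v \in \mathbb{R}^n$ the assignment $y_k \mapsto \nabla g_k(x) \cdot v$ is linear on $\operatorname{span}\{y_k\}$ and bounded in absolute value by $\Lip(\gamma)|v|$, so it extends uniquely to a bounded linear functional on $Y$ of norm at most $\Lip(\gamma)|v|$. Since $X = Y^*$, this functional is represented by a unique element $D^*\gamma_x(v) \in X$, and linearity in $v$ is inherited from the linearity of each $\nabla g_k(x)$. To verify the weak-$*$ differentiability limit, one first handles $y = y_k$ via Rademacher's theorem applied to $g_k$ at $x$ and then extends to arbitrary $y \in Y$ by a three-term approximation, absorbing the error using the uniform bound $\|\gamma(x+h) - \gamma(x) - D^*\gamma_x(h)\|_X / |h| \leq 2\Lip(\gamma)$.

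For the metric differentiability and the identity $\|D^*\gamma_x(v)\|=md\,\gamma_x(v)$, the crucial input is the dual-norm formula
$$\|z\|_X = \sup_k \langle z, y_k\rangle \qquad \text{for every } z \in X,$$
which converts metric difference quotients into countable suprema of scalar ones:
$$\frac{\|\gamma(x+tv)-\gamma(x)\|_X}{|t|} = \sup_k \left\langle \frac{\gamma(x+tv)-\gamma(x)}{|t|}, y_k \right\rangle.$$
Termwise passage to the limit on the right hand side immediately yields the lower bound $\liminf_{t\to 0} \geq \|D^*\gamma_x(v)\|_X$. The hardest step, which I expect to be the main obstacle, is the matching $\limsup$ bound, since one cannot naively interchange supremum and limit along a sequence. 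I would resolve this along Kirchheim's lines by passing to a further full-measure subset on which $v \mapsto \|D^*\gamma_x(v)\|_X$ is a genuine continuous seminorm on $\mathbb{R}^n$, then using equicontinuity of the difference-quotient family (coming from the uniform Lipschitz bound on $\gamma$) to upgrade pointwise convergence along rational directions $v$ and countably many $y_k$ to uniform convergence on compact subsets, which forces the $\limsup$ to agree with $\|D^*\gamma_x(v)\|_X$ at almost every $x$.
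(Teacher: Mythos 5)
First, note that the paper does not prove this statement: it is cited verbatim from Ambrosio and Kirchheim, so there is no internal proof here to compare against. Judged on its own merits against the cited source, your outline follows the right track. Applying Rademacher to the countable family $g_k = \langle\gamma(\cdot), y_k\rangle$ and reassembling via the dual pairing does give $w^*$-differentiability a.e., and your lower bound
$\liminf_{t\to 0}\|\gamma(x+tv)-\gamma(x)\|/|t| \ge \|D^*\gamma_x(v)\|$
via the formula $\|z\|_X=\sup_k|\langle z,y_k\rangle|$ is exactly the weak-$*$ lower semicontinuity of the dual norm. (Minor point: you need the absolute value, or a symmetric dense set $\{y_k\}$.)

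The genuine gap is in the matching upper bound, which you correctly flag as the hard step but do not actually close. The obstruction is that the dual norm is only weak-$*$ \emph{lower} semicontinuous, not weak-$*$ continuous: a net $z_t\rightharpoonup^* z$ can have $\limsup\|z_t\|>\|z\|$ (e.g.\ $e_n\rightharpoonup^*0$ in $\ell^\infty=(\ell^1)^*$ while $\|e_n\|_\infty=1$). Consequently, the inequality $\limsup_{t\to 0}\|\gamma(x+tv)-\gamma(x)\|/|t|\le\|D^*\gamma_x(v)\|$ is not a consequence of the weak-$*$ convergence of the difference quotients, and the mechanism you describe---equicontinuity of the difference quotients in $v$ together with a countable set of test functionals $y_k$---only reproduces the $\liminf$ lower bound: interchanging $\sup_k$ with $\lim_{t\to 0}$ in the favorable direction would require uniformity over $k$, which the Lipschitz bound does not supply. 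The Kirchheim argument that Ambrosio--Kirchheim actually invoke introduces the further countable family of scalar Lipschitz maps $h\mapsto\|\gamma(h)-\gamma(q)\|$ for $q$ ranging over a dense subset of $\mathbb{R}^n$, applies Rademacher to these as well, and then runs a Lebesgue-density/Lusin-type argument to upgrade a.e.\ differentiability of these auxiliary maps into existence of the metric differential. That extra family, and the density argument attached to it, is the missing ingredient your sketch would need to fill in.
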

For curves in $X$, this theorem translates to the following.
For the remainder of this section,
$\mathcal{I}$ will be any (bounded or unbounded) interval in $\mathbb{R}$.
\begin{corollary}
\label{t-equalderiv}
A Lipschitz curve $\gamma:\mathcal{I} \to X$
is $w^*$-differentiable a.e. and 
$$
\Vert \gamma'(x) \Vert =
 |\dot{\gamma}|(x) 
:= \lim_{h \to 0} \frac{\Vert \gamma(x+h) - \gamma(x) \Vert}{|h|}
\quad
\text{for a.e. }x \in [a,b].
$$
\end{corollary}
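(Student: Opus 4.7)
The strategy is to reduce the statement to the Ambrosio--Kirchheim theorem cited above by extending $\gamma$ to a Lipschitz curve defined on all of $\mathbb{R}$ and then interpreting the $n=1$ case of that theorem.

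First, I would extend $\gamma: \mathcal{I} \to X$ to a Lipschitz curve $\tilde\gamma:\mathbb{R} \to X$ with the same Lipschitz constant (up to a universal factor). For a bounded interval $[a,b]$ this is achieved by constant extension: $\tilde\gamma(t) = \gamma(a)$ for $t<a$ and $\tilde\gamma(t) = \gamma(b)$ for $t>b$; unbounded endpoints are handled similarly (or left alone). Since the $w^*$-derivative and the metric derivative are local notions, it suffices to prove the conclusion for $\tilde\gamma$ on $\mathbb{R}$ and restrict back to $\mathcal{I}$, observing that for interior points of $\mathcal{I}$ the extended curve coincides with $\gamma$ on a neighborhood.

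Next, I would apply Theorem~3.5 of Ambrosio--Kirchheim to $\tilde\gamma: \mathbb{R} \to X$ with $n=1$. That theorem yields a full-measure set $N \subset \mathbb{R}$ on which $\tilde\gamma$ is simultaneously $w^*$-differentiable and metrically differentiable, with
$$
\Vert D^* \tilde\gamma_x(v) \Vert \; = \; md\, \tilde\gamma_x(v)
\qquad \text{for all } v \in \mathbb{R}.
$$
The linear map $D^* \tilde\gamma_x:\mathbb{R} \to X$ is determined by the single point $\tilde\gamma'(x) := D^*\tilde\gamma_x(1) \in X$, and by linearity it is straightforward to check that this point is exactly the $w^*$-derivative in the one-variable sense used throughout Section~\ref{s-weakstar}: for any $y \in Y$, pairing with $y$ and using $D^*\tilde\gamma_x(h) = h\,\tilde\gamma'(x)$ yields precisely the defining limit of $\gamma'(x)$.

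Finally, specializing $v=1$ in the identity $\Vert D^*\tilde\gamma_x(v)\Vert = md\,\tilde\gamma_x(v)$ gives
$$
\Vert \tilde\gamma'(x) \Vert \;=\; md\, \tilde\gamma_x(1) \;=\; \lim_{h \to 0} \frac{\Vert \tilde\gamma(x+h) - \tilde\gamma(x)\Vert}{|h|} \;=\; |\dot{\tilde\gamma}|(x)
$$
for every $x \in N$. Restricting to $x$ in the interior of $\mathcal{I}$ (a set of full measure in $\mathcal{I}$), $\tilde\gamma$ agrees with $\gamma$ near $x$, so $\tilde\gamma'(x) = \gamma'(x)$ and $|\dot{\tilde\gamma}|(x) = |\dot\gamma|(x)$, completing the proof. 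The only mildly delicate step is the bookkeeping between the linear map $D^*\tilde\gamma_x$ and the vector $\gamma'(x)$, and the verification that the extension preserves both notions of derivative at interior points; neither presents genuine difficulty.
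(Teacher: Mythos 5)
Your proposal is correct and follows the same route the paper takes implicitly: the corollary is simply the $n=1$ specialization of the Ambrosio--Kirchheim theorem, with the standard identifications $D^*\gamma_x \leftrightarrow D^*\gamma_x(1)$ and $md\,\gamma_x(1) = |\dot\gamma|(x)$, and your constant-extension device to pass from $\mathcal{I}$ to $\mathbb{R}$ is the natural way to handle the domain mismatch that the paper glosses over. Nothing in your argument deviates from the paper's (unwritten) proof in any essential way.
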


We will now use this result to verify a change of variables formula for Lipschitz curves in $X$.
\begin{proposition}
\label{p-CoV}
If $\gamma:\mathcal{I} \to X$
is Lipschitz and
$\gamma$ is injective on $[a,b] \subseteq \mathcal{I}$,
then
$$
\int_{\gamma([a,b])} f \, d \mathcal{H}^1|_{\gamma([a,b])} = \int_a^b f(\gamma(t)) \Vert \gamma'(t)\Vert \, dt
\quad
\text{for all }
f \in L^1(\mathcal{H}^1|_{\gamma([a,b])}).
$$
\end{proposition}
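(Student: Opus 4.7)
The plan is to reduce the claim to the classical length formula for injective Lipschitz curves in a metric space and then apply Corollary~\ref{t-equalderiv} to pass from the metric derivative to $\Vert \gamma' \Vert$.

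The key preliminary identity is
$$
\mathcal{H}^1(\gamma([c,d])) = \int_c^d |\dot\gamma|(t)\,dt
\qquad \text{for all } [c,d] \subseteq [a,b].
$$
For injective Lipschitz curves into any metric space this is standard: the right-hand side equals the length of $\gamma|_{[c,d]}$ (via the metric derivative, whose a.e.\ existence for Lipschitz curves is classical), while injectivity together with Lipschitz continuity force the image to be a rectifiable arc whose length equals its $\mathcal{H}^1$-measure. Equivalently, the pushforward of the measure $|\dot\gamma|(t)\,dt$ on $[a,b]$ under $\gamma$ equals $\mathcal{H}^1|_{\gamma([a,b])}$.

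Next, I would promote this set-level identity to the desired integral identity in the standard way. It holds by construction when $f = \chi_{\gamma([c,d])}$. Since $\gamma : [a,b] \to \gamma([a,b])$ is a continuous injection from a compact interval, it is a homeomorphism onto its image, so $\gamma^{-1}$ is Borel measurable and a monotone-class argument extends the identity to $f = \chi_E$ for every Borel $E \subseteq \gamma([a,b])$. Linearity and the monotone convergence theorem then give it for all nonnegative Borel $f$, and general $f \in L^1(\mathcal{H}^1|_{\gamma([a,b])})$ is handled by splitting into positive and negative parts.

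Finally, Corollary~\ref{t-equalderiv} yields $\Vert \gamma'(t) \Vert = |\dot\gamma|(t)$ for a.e.\ $t \in [a,b]$, so the weight $|\dot\gamma|(t)\,dt$ in the preceding formula can be replaced by $\Vert \gamma'(t) \Vert\,dt$, which is exactly the claim. The argument is not seriously obstructed anywhere; the only mild subtlety is confirming that the standard metric-space length formula applies to Lipschitz curves into the abstract Banach space $X$, but since that formula depends only on the metric structure of the target and on a.e.\ existence of the metric derivative (supplied by Corollary~\ref{t-equalderiv}), it applies here without modification.
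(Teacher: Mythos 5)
Your proof follows exactly the same route as the paper: establish the set-level identity $\mathcal{H}^1(\gamma([c,d])) = \int_c^d |\dot\gamma|(t)\,dt$ for all subintervals $[c,d]\subseteq[a,b]$ (the paper cites \cite[Theorem 3.6]{HajSobMet} for this), substitute $|\dot\gamma| = \Vert\gamma'\Vert$ a.e.\ via Corollary~\ref{t-equalderiv}, and then extend from characteristic functions of arcs to general $L^1$ functions by the standard approximation argument. You have merely spelled out the monotone-class/monotone-convergence step that the paper compresses into the phrase ``standard approximations,'' so the content is the same.
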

\begin{proof}
According to \cite[Theorem 3.6]{HajSobMet}, for any $[c,d] \subseteq [a,b]$ we have
$$
\mathcal{H}^1(\gamma([c,d])) = \ell(\gamma|_{[c,d]})
=
\int_c^d |\dot{\gamma}|(t) \, dt
$$
where $\ell(\gamma|_{[c,d]})$ is the length of the curve 
$\gamma:[c,d] \to X$.
Corollary~\ref{t-equalderiv} therefore gives
$
\mathcal{H}^1(\gamma([c,d])) =
\int_c^d \Vert \gamma'(t)\Vert \, dt.
$
Using standard approximations of $L^1$
functions by characteristic functions $\chi|_{\gamma([c,d])}$
proves the proposition.
\end{proof}

\subsection{$w^*$-smooth curves}

\begin{definition}
\label{d-c1}
A curve $\gamma:\mathcal{I} \to X$
is $C_{w^*}^{1}$
if $\gamma'(x)$ exists for all $x \in \mathcal{I}$
and $\gamma':\mathcal{I} \to X$ is continuous.
\end{definition}

\begin{proposition}
\label{p-bilip}
If $\gamma:\mathcal{I} \to X$ 
is $C_{w^*}^1$ with 
$$
0 < m := \inf_{\mathcal{I}} \Vert \gamma' \Vert
\quad
\text{ and }
\quad
\sup_{\mathcal{I}} \Vert \gamma' \Vert =: M < \infty,
$$
then $\gamma$ is uniformly locally bi-Lipschitz.
That is, there is some $\delta > 0$ so that
$$
\tfrac{m}{2}|t_2-t_1| \leq \Vert \gamma(t_2) - \gamma(t_1) \Vert
\leq M |t_2-t_1|
\quad
\text{for all } t_1,t_2 \in \mathcal{I} \text{ with } |t_2-t_1| < \delta.
$$
\end{proposition}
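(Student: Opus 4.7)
The plan is to separate the two inequalities, with the lower bound being the content and the upper bound essentially automatic. Both reduce to scalar statements via Hahn-Banach applied to $X = Y^{*}$: for every $z \in X$, $\|z\| = \sup\{ \langle z,y\rangle : y \in Y, \|y\| \le 1\}$.

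First I would handle the upper bound, which in fact holds globally without any $\delta$. Fix $y \in Y$. By the definition of the $w^{*}$-derivative, the real-valued function $f_{y}(t) := \langle \gamma(t),y\rangle$ is differentiable on $\mathcal{I}$ with $f_{y}'(t) = \langle \gamma'(t),y\rangle$, and $f_{y}'$ is continuous because $\gamma'$ is (norm) continuous. Classical FTC therefore gives
$$
\langle \gamma(t_{2})-\gamma(t_{1}), y\rangle = \int_{t_{1}}^{t_{2}} \langle \gamma'(r),y\rangle\, dr
$$
for all $t_{1},t_{2} \in \mathcal{I}$. Since $|\langle \gamma'(r),y\rangle| \le M\|y\|$, taking the supremum over $\|y\|\le 1$ yields $\|\gamma(t_{2})-\gamma(t_{1})\| \le M|t_{2}-t_{1}|$.

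For the lower bound the idea is to linearize at $t_{1}$ and control the error. Apply the identity above to the function $t \mapsto \gamma(t) - (t-t_{1})\gamma'(t_{1})$, whose $w^{*}$-derivative at $r$ is $\gamma'(r) - \gamma'(t_{1})$. The same Hahn-Banach/FTC argument then produces
$$
\bigl\| \gamma(t_{2}) - \gamma(t_{1}) - (t_{2}-t_{1})\gamma'(t_{1})\bigr\| \; \le \; |t_{2}-t_{1}| \sup_{r \in [t_{1},t_{2}]} \|\gamma'(r) - \gamma'(t_{1})\|.
$$
Combining this with the reverse triangle inequality and $\|\gamma'(t_{1})\| \ge m$ gives
$$
\|\gamma(t_{2})-\gamma(t_{1})\| \; \ge \; m|t_{2}-t_{1}| - |t_{2}-t_{1}| \sup_{r \in [t_{1},t_{2}]} \|\gamma'(r)-\gamma'(t_{1})\|.
$$
So it suffices to choose $\delta > 0$ so that $\|\gamma'(r)-\gamma'(s)\| < m/2$ whenever $|r-s| < \delta$; then the right side is at least $\tfrac{m}{2}|t_{2}-t_{1}|$, as required.

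The only real obstacle is producing a \emph{single} such $\delta$ that works uniformly over $\mathcal{I}$, i.e.\ extracting a uniform modulus of continuity for $\gamma'$. On a compact interval this is automatic from continuity of $\gamma'$, which suffices for the applications to $\Gamma = \gamma([a,b])$ that drive Theorem~\ref{t-main}; in the $C_{w^{*}}^{1,\alpha}$ setting an explicit $\delta \sim (m/(2[\gamma']_{\alpha}))^{1/\alpha}$ is even available directly from the Hölder modulus. For genuinely unbounded $\mathcal{I}$ one would either need to supplement the hypotheses with uniform continuity of $\gamma'$ or weaken the conclusion to a locally-in-$t$ bi-Lipschitz estimate; this is the one step where the bare $C^{1}_{w^{*}}$ hypothesis is not by itself enough.
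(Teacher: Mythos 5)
Your argument is essentially identical to the paper's: integrate $\langle\gamma'(\cdot),y\rangle$ against $y\in Y$ with $\|y\|_Y\le 1$ and take a supremum for the upper bound, then linearize at $t_1$, apply the same FTC/duality step to the error $\gamma-L_{t_1}$, and finish with the reverse triangle inequality and a $\delta$ chosen so that $\|\gamma'(r)-\gamma'(s)\|<m/2$ when $|r-s|<\delta$. Your closing remark is also accurate and applies equally to the paper's own proof: the step ``by continuity of $\gamma'$ there is $\delta>0$\ldots'' tacitly uses uniform continuity, which is automatic on compact $\mathcal{I}$ (the case actually used for Theorem~\ref{t-main}) and is supplied by the H\"older modulus in the $C^{1,\alpha}_{w^*}$ setting, but is not guaranteed by bare $C^1_{w^*}$ on an unbounded interval.
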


\begin{proof}
For any $y \in Y$ and any $[t_1,t_2] \subseteq \mathcal{I}$, we have
\begin{align*}
    \langle \gamma(t_2) - \gamma(t_1) , y \rangle
    =
    \int_{t_1}^{t_2} \tfrac{d}{ds} \langle \gamma(s) , y \rangle \, ds
    =
    \int_{t_1}^{t_2} \langle \gamma'(s) , y \rangle \, ds.
\end{align*}
Taking the supremum over all $y \in Y$ with $\Vert y \Vert_Y = 1$ (first on the right then on the left) gives
$$
\Vert \gamma(t_2) - \gamma(t_1) \Vert
\leq 
\int_{t_1}^{t_2} \Vert \gamma'(s) \Vert \, ds
\leq  M |t_2 - t_1|.
$$

Now, by the continuity of $\gamma'$, there is some $\delta > 0$ so that $\Vert \gamma'(t_1) - \gamma'(t_2)\Vert<\tfrac{m}{2}$ whenever $|t_2 - t_1| < \delta$.
Fix $[t_1,t_2] \subseteq \mathcal{I}$ so that $|t_2-t_1|<\delta$ . Then for all $y \in Y$ we have
\begin{align*}
\left\langle \frac{\gamma(t_2)-\gamma(t_1)-(t_2-t_1) \gamma'(t_1)}{t_2-t_1},y \right\rangle
    &=
     \frac{1}{t_2-t_1}
     \int_{t_1}^{t_2} \langle \gamma'(s),y \rangle \, ds
    -
    \langle \gamma'(t_1),y \rangle\\
     &=
      \frac{1}{t_2-t_1}
    \int_{t_1}^{t_2} \langle \gamma'(s)-\gamma'(t_1),y \rangle \, ds.
\end{align*}
Again taking the supremum over all $y \in Y$ with $\Vert y \Vert_Y = 1$ gives
\begin{align*}
\left\Vert \frac{\gamma(t_2)-\gamma(t_1)-(t_2-t_1) \gamma'(t_1)}{t_2-t_1} \right \Vert
&\leq 
\frac{1}{t_2-t_1}
\int_{t_1}^{t_2} \Vert \gamma'(s) - \gamma'(t_1)\Vert \, ds
<
\tfrac{m}{2}.
\end{align*}
Therefore
$$
m|t_2-t_1| - \Vert \gamma(t_2) - \gamma(t_1)\Vert
\leq 
\Vert\gamma'(t_1)\Vert |t_2-t_1| - \Vert \gamma(t_2) - \gamma(t_1)\Vert \leq \tfrac{m}{2} |t_2-t_1| 
$$
for all $|t_1-t_2| < \delta$ in $\mathcal{I}$, and the result follows.
\end{proof}

The following corollary is immediate since the bilipschitz image of a regular set is itself regular (in the sense of \eqref{d-regular}), 
and $\Gamma = \gamma([a,b])$ is a finite union of bilipschitz images of intervals
when $C_{w^*}^1$ and
$\inf_{[a,b]} \Vert \gamma' \Vert > 0$.

\begin{corollary}
\label{c-regular}
If $\gamma: [a,b] \to X$ is
$C_{w^*}^1$ and
$\inf_{[a,b]} \Vert \gamma' \Vert > 0$,
then 
$\Gamma = \gamma([a,b])$ is regular.
\end{corollary}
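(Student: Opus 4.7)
The plan is to apply Proposition~\ref{p-bilip} and reduce the claim to the fact that a finite union of bi-Lipschitz images of closed intervals is 1-regular. Since $\gamma'$ is continuous on the compact interval $[a,b]$, the constant $M := \sup_{[a,b]} \Vert \gamma' \Vert$ is finite, while by hypothesis $m := \inf_{[a,b]} \Vert \gamma' \Vert > 0$. Proposition~\ref{p-bilip} therefore produces a $\delta > 0$ such that $\gamma$ is bi-Lipschitz with constants $m/2$ and $M$ on every subinterval of $[a,b]$ of length less than $\delta$.

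First I would choose $N \in \N$ with $(b-a)/N < \delta$ and partition $[a,b]$ into closed subintervals $I_1,\ldots,I_N$ of equal length. Writing $\Gamma_j := \gamma(I_j)$, each restriction $\gamma|_{I_j}$ is a bi-Lipschitz bijection onto $\Gamma_j$. Since closed intervals in $\R$ are 1-regular and bi-Lipschitz maps preserve 1-regularity with explicit constants, each $\Gamma_j$ is 1-regular; alternatively, one may apply Proposition~\ref{p-CoV} together with the pointwise bounds $m \leq \Vert \gamma'(t) \Vert \leq M$ on each $I_j$ to derive the two-sided ball estimate directly.

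Finally, I would assemble these local estimates into global 1-regularity of $\Gamma = \bigcup_{j=1}^N \Gamma_j$. Upper regularity is immediate from the union bound $\Hd^1(\Gamma \cap B(x,r)) \leq \sum_j \Hd^1(\Gamma_j \cap B(x,r)) \lesssim r$. For the lower bound at any $x \in \Gamma$, pick $j$ with $x \in \Gamma_j$; when $r$ does not exceed a fixed fraction of $\diam(\Gamma_j)$, the 1-regularity of $\Gamma_j$ alone yields $\Hd^1(\Gamma \cap B(x,r)) \gtrsim r$. The only (very mild) technical point is the lower bound at radii $r$ comparable to $\diam(\Gamma)$ but much larger than the diameters of the individual pieces $\Gamma_j$; here I would fix a threshold $r_0$ of order $(b-a)/N$ and observe that for $r_0 \leq r \leq \diam(\Gamma) \leq M(b-a)$ one has $\Hd^1(\Gamma \cap B(x,r)) \geq \Hd^1(\Gamma_j \cap B(x,r_0)) \gtrsim r_0 \gtrsim r$, where the last inequality uses the uniform upper bound $r \leq M(b-a)$. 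This large-scale lower bound is the main, though quite minor, obstacle; once it is handled, regularity holds with a constant depending only on $m$, $M$, $N$, and $b-a$.
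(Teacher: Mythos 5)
Your proof takes essentially the same route as the paper, which simply observes that $\Gamma$ is a finite union of bi-Lipschitz images of intervals (via Proposition~\ref{p-bilip}) and that such images are regular; you merely spell out the union argument in full, including the correct (and sometimes glossed-over) handling of the lower Ahlfors bound at radii $r$ exceeding $\diam(\Gamma_j)$, using $\diam(\Gamma) \le M(b-a)$ and $\diam(\Gamma_j) \gtrsim m(b-a)/N$ to control the ratio $r/r_0$. The argument is correct and matches the paper's sketch in substance.
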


\subsection{$C^{1,\alpha}_{w^*}$ curves}


\begin{definition}
\label{d-c1a}
For any $\alpha \in (0,1]$,
we say that $\gamma:\mathcal{I} \to X$ is $C_{w^*}^{1,\alpha}$
if it is $C_{w^*}^{1}$ and $\gamma'$ is $\alpha$-H\"{o}lder continuous
i.e. there is a constant $c > 0$ so that
$$
\Vert \gamma'(t_1) - \gamma'(t_2) \Vert
\leq c |t_1-t_2|^\alpha
\quad
\text{for all }
t_1,t_2 \in \mathcal{I}.
$$
\end{definition}

For a fixed $t_0 \in \mathcal{I}$,
call
$$
L_{t_0}(t) = \gamma(t_0)+(t-t_0)\gamma'(t_0)
$$
the {\em linear approximation of $\gamma$ at $t_0$}.

\begin{lemma}
\label{l-flat}
If $\gamma:\mathcal{I} \to X$ is $C_{w^*}^{1,\alpha}$,
then, for any $t_0 \in \mathcal{I}$,
\begin{equation}
\label{e-flat}
\Vert \gamma(t) - L_{t_0}(t) \Vert
\leq 
c|t-t_0|^{1+\alpha}
\end{equation}
where $c>0$ is the constant from the $\alpha$-H\"{o}lder continuity.
\end{lemma}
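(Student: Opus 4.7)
The plan is to mimic the dualization trick already used in the proof of Proposition~\ref{p-bilip}. Fix $t_0 \in \mathcal{I}$ and $t \in \mathcal{I}$, and observe that for every $y \in Y$ the scalar function $s \mapsto \langle \gamma(s), y \rangle$ is differentiable at every point of $\mathcal{I}$ with derivative $\langle \gamma'(s), y \rangle$ by the definition of the $w^*$-derivative; moreover, since $\gamma'$ is $\alpha$-H\"older continuous (hence continuous), the map $s \mapsto \langle \gamma'(s), y\rangle$ is continuous, so we may apply the usual one-variable fundamental theorem of calculus to obtain
$$
\langle \gamma(t) - \gamma(t_0), y\rangle
=
\int_{t_0}^{t} \langle \gamma'(s), y \rangle\, ds.
$$

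Next, I would subtract the linear part. Since $(t-t_0)\langle \gamma'(t_0), y\rangle = \int_{t_0}^{t} \langle \gamma'(t_0), y\rangle\, ds$, rearranging gives
$$
\langle \gamma(t) - L_{t_0}(t), y \rangle
=
\int_{t_0}^{t} \langle \gamma'(s) - \gamma'(t_0), y \rangle\, ds.
$$
Taking absolute values, applying $|\langle v, y \rangle| \leq \|v\|\,\|y\|_Y$, and invoking the $\alpha$-H\"older hypothesis $\|\gamma'(s) - \gamma'(t_0)\| \leq c|s-t_0|^\alpha$ yields
$$
|\langle \gamma(t) - L_{t_0}(t), y\rangle|
\leq \|y\|_Y \left|\int_{t_0}^{t} c|s-t_0|^\alpha\, ds \right|
= \|y\|_Y \cdot \frac{c}{1+\alpha}|t-t_0|^{1+\alpha}.
$$

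Finally, taking the supremum over all $y \in Y$ with $\|y\|_Y = 1$, and using that $X = Y^*$ so that $\|x\| = \sup_{\|y\|_Y = 1} |\langle x, y\rangle|$, we conclude
$$
\|\gamma(t) - L_{t_0}(t)\| \leq \frac{c}{1+\alpha}|t-t_0|^{1+\alpha} \leq c|t-t_0|^{1+\alpha},
$$
which is the claimed estimate \eqref{e-flat}. There is no real obstacle here; the only point that warrants care is the justification of the scalar FTC, but this is immediate once one checks that $\gamma'$ being everywhere-defined and continuous (part of the $C^1_{w^*}$ hypothesis strengthened by H\"older continuity) makes $s \mapsto \langle \gamma'(s), y\rangle$ a continuous representative of the derivative of $s \mapsto \langle \gamma(s), y \rangle$.
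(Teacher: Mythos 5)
Your proof is correct and follows essentially the same dualization-plus-FTC argument as the paper (pair with $y\in Y$, integrate $\gamma'(s)-\gamma'(t_0)$, take the supremum over the unit sphere of $Y$, and apply H\"older continuity). You even record the slightly sharper constant $c/(1+\alpha)$, which the paper discards.
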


\begin{proof}
For any $y \in Y$, we have, as above
\begin{align*}
    \langle \gamma(t) - L_{t_0}(t),y \rangle
    =
    \langle \gamma(t) - \gamma(t_0) , y \rangle - (t-t_0) \langle \gamma'(t_0),y \rangle 
     &=
    \int_{t_0}^t \langle \gamma'(s)-\gamma'(t_0),y \rangle \, ds.
\end{align*}
Again, taking the supremum over all $y \in Y$ with $\Vert y \Vert_Y = 1$ gives
\begin{align*}
\Vert \gamma(t) - L_{t_0}(t) \Vert
\leq 
\int_{t_0}^t \Vert \gamma'(s)-\gamma'(t_0) \Vert \, ds
\leq 
c \int_{t_0}^t |s-t_0|^{\alpha} \, ds
\leq
c|t-t_0|^{1+\alpha}.
\end{align*}
\end{proof}

We will now define the class of curves to which Theorem~\ref{t-main} may be applied.

\begin{definition}
A set $\Gamma \subset X$
is a {\em $C_{w^*}^{1,\alpha}$ regular curve}
if 
$\Gamma = \gamma([a,b])$ for some
$C_{w^*}^{1,\alpha}$ curve $\gamma:[a,b] \to X$ with $\inf_{[a,b]} \Vert \gamma' \Vert > 0$.
\end{definition}

\begin{remark}
\label{r-unbound}
Note that, in Theorem~\ref{t-main}, we consider only curves defined on compact intervals (as in \cite{SIOCLZ}).
The reason for this restriction may be seen in the next section.
In particular, in the proof of Lemma~\ref{l-chooseG},
we rely on the fact that the curve $\Gamma$ has finite length.
Unfortunately, it is not clear to me how this lemma may be proven if $\Gamma$ is unbounded.
See the discussion following the proof of Lemma~\ref{l-chooseG} for more details.
\end{remark}

\section{Proof of Theorem~\ref{t-main}}
\label{sec-proof}

Theorem~\ref{t-main} will follow from the following ``good lambda'' argument 
a la David \cite{MR956767} and Tolsa \cite{tolsabook}
which allows us to reduce our attention in any ball to a large enough subset $G$.
The lemma stated here will follow from 
the more general result proven later 
(see Theorem~\ref{t:goodlambda}) which is proven for upper regular (but not necessarily doubling)
measures on doubling metric spaces.

\begin{lemma}
\label{t-goodl}
Suppose $X$ is a Banach space,
$\Gamma$ is a regular set in $X$,
and $K$ is a 1-dimensional CZ-kernel on $X$
which satisfies the UBL condition.
Assume that there exist constants 
$0< \theta <1$ and $c > 0$
such that,
for every ball $B = B(x,r)$ with $x \in \Gamma$ and $r>0$,
there is a compact set $G' \subset B \cap \Gamma$ 
such that $\mathcal{H}^1(G') \geq \theta \mathcal{H}^1(B \cap \Gamma)$ and 
\begin{equation}
    \label{e-weaktype}
\mathcal{H}^1\left( \{ x \in G' \, : \, T_{\mathcal{H}^1|_\Gamma,*}f(x) > \lambda\} \right)
\leq \frac{c}{\lambda}\Vert f \Vert_{L^1(\mathcal{H}^1|_\Gamma)}
\quad
\forall f \in L^1 (\mathcal{H}^1|_\Gamma), \, \lambda>0.
\end{equation}
Then there is a constant $A_p'=A_p'(c, \Gamma, K, \theta)$ for each $p \in (1,\infty)$ such that 
$$
\Vert T_{\mathcal{H}^1|_\Gamma,*} f \Vert_{L^{p}(\mathcal{H}^1|_\Gamma)} \leq A_p' \Vert f \Vert_{L^p(\mathcal{H}^1|_\Gamma)}.
$$
\end{lemma}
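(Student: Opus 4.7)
The plan is to deduce $L^p$-boundedness of $T^* := T_{\mathcal{H}^1|_\Gamma,*}$ from the big-pieces hypothesis via a David-style good-$\lambda$ inequality, then integrate in $\lambda$. Since $\Gamma$ is $1$-regular, $\mathcal{H}^1|_\Gamma$ is doubling, so the Hardy--Littlewood maximal operator $M := M_{\mathcal{H}^1|_\Gamma}$ is of weak type $(1,1)$ and $L^p$-bounded for every $p > 1$. The target inequality is
$$
\mathcal{H}^1\!\left(\{T^* f > 2\lambda\} \cap \{Mf \leq \epsilon\lambda\}\right) \leq (1-\eta)\, \mathcal{H}^1(\{T^* f > \lambda\})
$$
for a fixed $\eta>0$ and all sufficiently small $\epsilon > 0$. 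Once established, integrating against $p\lambda^{p-1}\,d\lambda$ and absorbing (having first reduced via truncation to $f$ bounded with bounded support so that $\Vert T^* f\Vert_p<\infty$ a priori) yields $\Vert T^*f\Vert_{L^p(\mathcal{H}^1|_\Gamma)} \leq A_p' \Vert f\Vert_{L^p(\mathcal{H}^1|_\Gamma)}$.

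To prove the good-$\lambda$ inequality, fix $\lambda > 0$ and let $\Omega_\lambda := \{T^* f > \lambda\} \subset \Gamma$. If $\Omega_\lambda = \Gamma$ (so $\Gamma$ is bounded), I would apply hypothesis \eqref{e-weaktype} directly to a single ball $B \supseteq \Gamma$ to get a global weak $(1,1)$ bound on $\Gamma$, from which the target estimate follows immediately. Otherwise, I take a Whitney-type covering of $\Omega_\lambda$ by balls $B_i = B(x_i, r_i)$ centered on $\Gamma$ with controlled overlap, chosen so that a fixed dilate $KB_i$ meets $\Gamma \setminus \Omega_\lambda$ at some point $z_i$ with $T^* f(z_i) \leq \lambda$. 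For each $i$, invoke the hypothesis at $B_i$ to produce a compact $G_i \subset B_i \cap \Gamma$ with $\mathcal{H}^1(G_i) \geq \theta\, \mathcal{H}^1(B_i \cap \Gamma)$ on which the weak type estimate \eqref{e-weaktype} holds.

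For $x \in B_i$ with $T^* f(x) > 2\lambda$ and $Mf(x) \leq \epsilon\lambda$, split $f = f\chi_{cB_i} + f\chi_{\Gamma \setminus cB_i}$ with $c \gg K$. The Hölder condition \eqref{e-growth-holder} together with a standard tail estimate using the growth bound \eqref{e-growth-metric} shows that
$$
|T^*(f\chi_{\Gamma \setminus cB_i})(x) - T^*(f\chi_{\Gamma \setminus cB_i})(z_i)| \lesssim Mf(x) + Mf(z_i) \lesssim \lambda;
$$
combined with $T^* f(z_i) \leq \lambda$, this forces $T^*(f\chi_{cB_i})(x) \gtrsim \lambda$ once $\epsilon$ is small enough. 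Applying \eqref{e-weaktype} on $G_i$ to $f\chi_{cB_i}$, and using $\Vert f\chi_{cB_i}\Vert_{L^1(\mathcal{H}^1|_\Gamma)} \lesssim \epsilon\lambda\, \mathcal{H}^1(cB_i \cap \Gamma)$ (a consequence of $Mf \leq \epsilon\lambda$ somewhere in $cB_i$), gives
$$
\mathcal{H}^1\!\left(G_i \cap \{T^* f > 2\lambda\} \cap \{Mf \leq \epsilon\lambda\}\right) \lesssim \epsilon\, \mathcal{H}^1(B_i \cap \Gamma).
$$
Summing over $i$, using the complementary bound $\mathcal{H}^1((B_i \cap \Gamma) \setminus G_i) \leq (1-\theta)\,\mathcal{H}^1(B_i \cap \Gamma)$, and invoking the bounded overlap of the Whitney balls, then choosing $\epsilon$ small enough, produces the good-$\lambda$ inequality with some $\eta \in (0, \theta)$.

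The main obstacle I anticipate is the careful execution of the Whitney selection in the abstract doubling metric setting $(\Gamma, \mathcal{H}^1|_\Gamma)$ and, in particular, arranging the comparison points $z_i$ so that both the tail-difference estimate via \eqref{e-growth-holder} holds uniformly in the truncation parameter defining $T^*$ and the $L^1$-control on $f\chi_{cB_i}$ can be made quantitative. Once these technical points are handled, the integration step is routine.
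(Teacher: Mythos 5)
Your proposal is correct in outline and takes a genuinely different route from the paper. The paper does not run the good-$\lambda$ argument directly on $\Gamma$; instead it proves a general Theorem~\ref{t:goodlambda} for upper-regular measures with \emph{unbounded} support on doubling metric spaces, and then, because $\Gamma$ is bounded, appends a ray $\ell$ to produce $\hat\Gamma = \Gamma\cup\ell$ with unbounded support before invoking that theorem. The UBL hypothesis is used precisely to supply the weak $(1,1)$ estimate on the appended ray (via Lemma~\ref{t-L2toWeak}), and Lemma~\ref{l-glue} is then needed to merge the ray and $\Gamma$ pieces. Your approach avoids the ray entirely by handling the case $\Omega_\lambda = \Gamma$ directly via a single large ball and running the Whitney argument otherwise; notably, this means you never actually invoke UBL, and since $\mathcal{H}^1|_\Gamma$ is doubling (not merely upper regular) you also sidestep the paper's $b$-doubling-ball bookkeeping. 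The trade-off is that the paper's detour produces a reusable general theorem (which is advertised as an independent contribution), while your proof is more self-contained and tailored to the regular case.

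Two points to tighten. First, when $\Omega_\lambda = \Gamma$, applying hypothesis~\eqref{e-weaktype} to a ball $B\supseteq\Gamma$ does \emph{not} give a global weak $(1,1)$ bound on all of $\Gamma$ — it gives one only on a compact $G'\subset\Gamma$ with $\mathcal{H}^1(G')\geq\theta\,\mathcal{H}^1(\Gamma)$. The good-$\lambda$ inequality still follows, but by the two-piece bound: on $G'$ use the weak $(1,1)$ estimate together with $\Vert f\Vert_{L^1}\lesssim \epsilon\lambda\,\mathcal{H}^1(\Gamma)$ (which holds whenever $\{M f\leq\epsilon\lambda\}$ is nonempty, by regularity), and on $\Gamma\setminus G'$ simply use $\mathcal{H}^1(\Gamma\setminus G')\leq(1-\theta)\,\mathcal{H}^1(\Gamma)$. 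Second, the comparison $|T^*(f\chi_{\Gamma\setminus cB_i})(x)-T^*(f\chi_{\Gamma\setminus cB_i})(z_i)|\lesssim Mf(x)$ must be established uniformly over the truncation parameter in the supremum defining $T^*$; this requires the usual insertion of an intermediate ball around $z_i$ and the tail estimate of Lemma~\ref{l-tech}-type, as the paper carries out inside Lemma~\ref{l:goodlambda}. Both are routine but should not be elided.
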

\begin{proof}
Our goal will be to
apply Theorem~\ref{t:goodlambda} to the doubling metric space $M = \Gamma$.
If $\diam(\Gamma)= \infty$, then the result follows immediately.

Now suppose that $\diam(\Gamma) < \infty$.
In this case, 
we may not directly apply Theorem~\ref{t:goodlambda} since 
$\supp(\mathcal{H}^1|_\Gamma) = \Gamma$ is bounded.
To remedy this, we will append rays to $\Gamma$ (as in \cite{SIOCLZ}) and apply Theorem~\ref{t:goodlambda} to this new, unbounded support.
We can assume without loss of generality that $0 \in \Gamma$. 
Indeed, the left invariance of the metric in a normed vector space implies that $\mathcal{H}^1|_E = \mathcal{H}^1|_{x + E}$ for any $x \in X$ and $E \subset X$.

Since $\Gamma$ is regular, it follows that, for any ball $B$ centered on $\Gamma$ 
with radius $r = \diam(\Gamma) < \infty$,
$$
\mathcal{H}^1(\Gamma) = \mathcal{H}^1(\Gamma \cap B) \leq \reg(\Gamma) \diam(\Gamma) < \infty.
$$
Thus, by rescaling the norm, we may assume without loss of generality that $\mathcal{H}^1(\Gamma)=1$.

Consider the ray $\ell = \{ sv_0 : s \in [3,\infty) \}$ where $v_0 \in X$ satisfies $\Vert v_0 \Vert = 1$.
Set $\hat{\Gamma} = \Gamma \cup \ell$ and $\nu = \mathcal{H}^1|_{\hat{\Gamma}}$.
Since $\Gamma \subset B(0,2)$ and $\ell \cap B(0,2) = \emptyset$,
the measure $\nu$ is regular with $\reg(\nu)$ depending only on $\Gamma$.
Moreover, $\supp(\nu)$ is unbounded.

Fix $x \in \hat{\Gamma}$ and choose $r > 0$. Set $B = B(x,r) \cap \hat{\Gamma}$.
In order to apply Theorem~\ref{t:goodlambda},
it remains to prove that $B$
has a compact subset $G \subset B$
with
$\nu(G) \geq \theta \nu(B)$ 
satisfying
\begin{equation}
    \label{e:goodlamb0}
\nu\left( \{ x \in G \, : \, T_{\nu,*}f(x) > \lambda\} \right)
\leq \frac{c}{\lambda}\Vert f \Vert_{L^1(\nu)}
\quad
\text{for all } f \in L^1(\nu), \, \lambda > 0
\end{equation}
for a constant $c>0$ depending only on $K$, $\nu$, and $X$.

\noindent \textbf{Case 1:} $x \in \ell$.

Choose $G = B \cap \ell$.
If $r < 1$, then $B$ does not intersect the original set $\Gamma$
since $0 \in \Gamma$, $\diam(\Gamma) \leq \mathcal{H}^1(\Gamma) = 1$, and $d(\ell,0) = 3$.
Thus $\nu(B) = \nu(B \cap \ell) = \nu(G)$.
If $r \geq 1$, then 
$$
\nu(B) \leq \nu(\Gamma) + \nu(B \cap \ell) 
= 1 + \nu(G)
\leq (\reg(\nu) + 1) \nu(G).
$$
According to the UBL assumption on $K$,
there is a constant $C>0$ independent of $B$ such that
$$
\Vert T_{\mathcal{H}^1|_{\ell}} \Vert_{L^2(\mathcal{H}^1|_{\ell}) \to L^2(\mathcal{H}^1|_{\ell})} \leq C < \infty.
$$
Since $G \subset \ell$ and $\mathcal{H}^1|_G =\chi_G \, d \mathcal{H}^1|_{\ell}$, 
it follows that $\Vert T_{\mathcal{H}^1|_G} \Vert_{L^2(\mathcal{H}^1|_G) \to L^2(\mathcal{H}^1|_G)} \leq C$ as well.
Indeed, for any $f \in L^2(\mathcal{H}^1|_G)$ and $\varepsilon>0$,
\begin{align*}
\Vert T_{\mathcal{H}^1|_G,\varepsilon} f \Vert_{L^2(\mathcal{H}^1|_G)}
=
\Vert T_{\mathcal{H}^1|_\ell,\varepsilon} (f \chi_G) \Vert_{L^2(\mathcal{H}^1|_G)}
&\leq
\Vert T_{\mathcal{H}^1|_\ell,\varepsilon} (f \chi_G) \Vert_{L^2(\mathcal{H}^1|_\ell)}\\
&\leq
C \Vert f \chi_G \Vert_{L^2(\mathcal{H}^1|_\ell)}
=
C \Vert f  \Vert_{L^2(\mathcal{H}^1|_G)}.
\end{align*}
Thus
Lemma~\ref{t-L2toWeak}
applied to the metric space $G$ with measure $\mu = \nu|_G = \mathcal{H}^1|_G$
implies \eqref{e:goodlamb0}.

\noindent \textbf{Case 2:} $x \in \Gamma$



By our assumptions, 
there is a compact set $G' \subset B \cap \Gamma$
such that $\mathcal{H}^1(G') \geq \theta \mathcal{H}^1(B \cap \Gamma)$ and $T_{\mathcal{H}^1|_\Gamma,*}$ 
satisfies \eqref{e-weaktype} on $G'$ with constant independent of $B$.
Set $G = G' \cup (B \cap \ell)$.
We have
$$
\nu(G) = \nu(G') + \nu(B \cap \ell) \geq \theta\nu(B\cap \Gamma) + \theta\nu( B \cap \ell) = \theta \nu(B)
$$
since $\theta < 1$.
Moreover, the UBL assumption once again implies that
$$
\Vert T_{\mathcal{H}^1|_{\ell}} \Vert_{L^2(\mathcal{H}^1|_{\ell}) \to L^2(\mathcal{H}^1|_{\ell})} \leq C < \infty
$$
for a constant $C>0$ independent of $B$.
Lemma~\ref{t-L2toWeak} again implies that the operator $T_{\mathcal{H}^1|_{\ell},*}$ 
is of weak type $(1,1)$ 
on $\ell$.
Thus Lemma~\ref{l-glue} 
applied to the measure $\mu=\nu$ and the sets $A = \ell$ and $B= \Gamma$
ensures that $T_{\nu,*}$ 
satisfies \eqref{e:goodlamb0}.

Therefore, 
for any ball $B$ centered on $\hat{\Gamma}$,
we may use the arguments above to define a compact set $G \subset B \cap \hat{\Gamma}$ 
such that $\nu(G) \geq \theta \nu(B)$ and 
\begin{equation*}
\nu\left( \{ x \in G \, : \, T_{\nu,*}f(x) > \lambda\} \right)
\leq \frac{c}{\lambda}\Vert f \Vert_{L^1(\nu)}
\quad
\text{for all } f \in L^1(\nu), \, \lambda>0.
\end{equation*}
According to Theorem~\ref{t:goodlambda},
there is a constant $A_p=A_p(c, \Gamma, K, \theta)$ for each $p \in (1,\infty)$ such that 
$\Vert T_{\nu,*} f \Vert_{L^{p}(\nu)} \leq A_p \Vert f \Vert_{L^p(\nu)}$.
Since $\mathcal{H}^1|_\Gamma = \chi_{\Gamma} \, d \mathcal{H}^1|_{\hat{\Gamma}} = \chi_{\Gamma} \, d\nu$, 
we have proven the lemma.
\end{proof}

We will now present the proof of Theorem~\ref{t-main}.
Most of the work has already been done in Section~\ref{s-weakstar}, Lemma~\ref{t-goodl}, and \cite{SIOCLZ}.
We will simply see how the pieces fit together.

\begin{proof}[Proof of Theorem~\ref{t-main}]
Suppose without loss of generality that
$\Gamma = \gamma([0,1])$ where $\gamma:[0,1] \to X$ is $C_{w^*}^{1,\alpha}$ and $\Vert \gamma'\Vert \neq 0$ on $[0,1]$.
According to Corollary~\ref{c-regular}, 
$\Gamma$ is regular.
Assume again without loss of generality that $\mathcal{H}^1(\Gamma) \leq \ell(\gamma) = 1$.
Suppose $E \subset \Gamma$ is a regular set.
As before, since $\mathcal{H}^1|_E = \chi_E \, d \mathcal{H}^1|_\Gamma$, it suffices to prove that $T_{\mathcal{H}^1|_\Gamma}$ is bounded on $L^p(\mathcal{H}^1|_\Gamma)$ for $1<p<\infty$.

Write $\nu = \mathcal{H}^1|_\Gamma$.
We will now describe how to choose $G$ inside balls centered on $\Gamma$ 
so that Lemma~\ref{t-goodl} may be applied.

\begin{lemma}
\label{l-chooseG}
There is a constant $0<\theta<1$
such that,
for every ball $B$
centered on $\Gamma$,
there is an interval $[a,b] \subset [0,1]$
such that 
$\gamma$ is bilipschitz on $[a,b]$,
$G := \gamma([a,b]) \subset B \cap \Gamma$, 
and $\nu(G) \geq \theta \nu(B \cap \Gamma)$.
Moreover,
\begin{equation}
    \label{e-chooseG}
\diam\left([a,b] \cap \gamma^{-1}(B(y,s) \cap G)\right) \leq \tfrac{2s}{\inf_{[0,1]} \Vert \gamma' \Vert} \text{ for all } y\in G, \, s>0.
\end{equation}
\end{lemma}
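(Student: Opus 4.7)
The plan is to build $[a,b]$ as a subinterval of a uniformly short "bilipschitz piece" of $\gamma$, chosen so that $\gamma([a,b])$ captures a definite fraction of $\nu(B \cap \Gamma)$. Writing $m := \inf_{[0,1]} \|\gamma'\|$, I would first fix $\delta > 0$ so small that, on any closed subinterval $J \subset [0,1]$ of length at most $\delta$, two things hold: (i) $\gamma|_J$ is bilipschitz with lower constant arbitrarily close to $m$ (a refinement of Proposition~\ref{p-bilip} obtained by shrinking $\delta$ further so that $\|\gamma'(t_1)-\gamma'(t_2)\|$ is as small as desired), and (ii) $\gamma(J)$ lies within a $c|J|^{1+\alpha}$-tubular neighborhood of its tangent line at the midpoint, by Lemma~\ref{l-flat}. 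Then partition $[0,1]$ into $N = \lceil 1/\delta \rceil$ closed intervals $J_1, \ldots, J_N$ each of length at most $\delta$, on each of which the conclusions (i)--(ii) apply.

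Given a ball $B = B(x_0, r)$ centered on $\Gamma$, I would decompose $B \cap \Gamma = \bigcup_{k=1}^N (B \cap \gamma(J_k))$ and invoke pigeonhole to select $k$ with $\nu(B \cap \gamma(J_k)) \geq \nu(B \cap \Gamma)/N$. The preimage $\gamma^{-1}(B) \cap J_k$ is relatively open in $J_k$ and decomposes into disjoint maximal open intervals. By the flatness estimate from (ii), $\gamma|_{J_k}$ is close to a straight line segment, so $\gamma(J_k) \cap B$ should have only a uniformly bounded number $C_0$ of connected sub-arcs --- essentially because a nearly-straight arc can enter and leave any ball only a bounded number of times. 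Taking $[a,b]$ to be the closure of the interval whose image has the largest $\nu$-measure then yields $\nu(\gamma([a,b])) \geq \nu(B \cap \Gamma)/(N C_0)$, so $\theta := 1/(N C_0)$ serves as the universal constant. For the diameter bound~\eqref{e-chooseG}, I would use that $[a,b] \subset J_k$ and $\gamma|_{J_k}$ is bilipschitz with lower constant essentially $m$: for $t_1, t_2 \in [a,b] \cap \gamma^{-1}(B(y,s))$, the triangle inequality through $y$ gives $\|\gamma(t_1) - \gamma(t_2)\| < 2s$, and the lower bilipschitz bound then yields $|t_1 - t_2| \leq 2s/m$.

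The main obstacle will be establishing the uniform bound $C_0$ on the number of connected components of $\gamma^{-1}(B) \cap J_k$, independently of $r$ and $x_0$. The $C^{1,\alpha}$ condition restricts the oscillation of $\gamma'$ on $J_k$ to at most $c\delta^\alpha$, which combined with $\|\gamma'\| \geq m$ should prevent $\gamma|_{J_k}$ from crossing any sphere $\partial B(x_0, r)$ more than a fixed number of times. The delicate case is when $r$ is comparable to or smaller than the tubular width $c\delta^{1+\alpha}$ --- here the intuition that "nearly-straight arcs enter a ball once" is most fragile --- and making the component bound both quantitative and uniform in $r$ is the heart of the argument.
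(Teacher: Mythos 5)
Your approach has a genuine gap at exactly the step you flag: the uniform bound $C_0$ on the number of connected components of $\gamma^{-1}(B)\cap J_k$. Flatness of $\gamma|_{J_k}$ within a tube of width $c|J_k|^{1+\alpha}$ does not control how many times the arc crosses the sphere $\partial B$ when $r$ is small compared to that width; and even when $r$ is large, the Euclidean heuristic (``project onto the tangent line and use monotonicity'') relies on an inner product, which is unavailable in a general Banach space $X$. Making such a component count quantitative and uniform in $r$, $x_0$, and the ambient norm is a nontrivial separate problem, and the proposal does not resolve it — so as written the argument does not close.

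The paper avoids this entirely by never examining the structure of $\gamma^{-1}(B)$. Instead, it picks $a$ with $\gamma(a)=x$, the \emph{center} of $B$, and walks along the curve for a short, explicitly prescribed parameter time $|b-a| = \frac{\delta\,\nu(B\cap\Gamma)}{2M\,\reg(\nu)}$, where $\delta$ is the uniform local-bilipschitz scale from Proposition~\ref{p-bilip}, $M=\max\{1,\sup\Vert\gamma'\Vert\}$, and $m=\min\{1,\inf\Vert\gamma'\Vert\}$. Upper regularity, $\nu(B\cap\Gamma)\leq\reg(\nu)\,r$, does double duty: it forces $|b-a|<\delta$, so $[a,b]$ sits inside a single bilipschitz window (no pigeonholing, no component count) and $G:=\gamma([a,b])$ is automatically a connected arc; and it gives $M|b-a|<r$, so $G\subset B$. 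The lower bilipschitz bound then yields $\nu(G)\geq\Vert\gamma(b)-\gamma(a)\Vert\geq\tfrac{m}{2}|b-a|=\theta\,\nu(B\cap\Gamma)$ with $\theta=\tfrac{m\delta}{4M\reg(\nu)}$, and \eqref{e-chooseG} follows from the same lower bound as in your final paragraph. The idea you are missing is that one need not capture a fixed fraction of $B\cap\Gamma$ \emph{as a subset}; it suffices to produce any single arc inside $B$ whose length is comparable to $\nu(B\cap\Gamma)$, and since regularity ties both quantities to $r$, a short arc launched from the ball's center does the job without any topological bookkeeping.
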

\begin{proof}
Without loss of generality, we may assume $r < 2$ since $\diam(\Gamma) \leq \ell(\gamma) = 1$.
According to Proposition~\ref{p-bilip},
there is some 
$\delta \in (0,1)$ 
independent of the choice of $B$ such that 
\begin{equation}
    \label{e-bilip}
    \tfrac{m}{2}|t_1 - t_2| \leq \Vert \gamma(t_1) - \gamma(t_2) \Vert \leq M|t_1 - t_2|
    \quad \text{for all } |t_1-t_2|<\delta
\end{equation}
where $m = \min \{ 1, \inf_{[0,1]} \Vert \gamma' \Vert \}$
and
$M = \max \{ 1, \sup_{[0,1]} \Vert \gamma' \Vert \}$.
Choose $a,b \in [0,1]$
so that $\gamma(a) = x$
and $|b-a| 
= \frac{\delta \nu (B \cap \Gamma)}{2M  \text{reg}(\nu)}$. 
Note that $|b-a| < \delta$
since $\nu(B \cap \Gamma) < 2 \text{reg}(\nu)$. 
Without loss of generality, assume $a<b$.
Set $G = \gamma([a,b])$.
Note first that $G \subset B$ since 
$$
\Vert \gamma(t) - x \Vert
=
\Vert \gamma(t) - \gamma(a) \Vert \leq M|b-a| < \frac{\nu (B \cap \Gamma)}{\text{reg}(\nu)} \leq r
\quad
\text{ for all } t \in [a,b]
$$
where $r>0$ is the radius of $B$.
Equation \eqref{e-bilip} implies
\eqref{e-chooseG} and also gives
$$
\nu(G) \geq \Vert \gamma(b) - \gamma(a) \Vert
\geq \tfrac{m}{2} |b-a| = \frac{m\delta \nu (B \cap \Gamma)}{4M  \text{reg}(\nu)}.
$$
Setting $\theta = \frac{m \delta}{4M  \text{reg}(\nu)} < 1$
completes the proof of the lemma.
\end{proof}

\begin{remark}
It is at this point that our proof of Theorem~\ref{t-main}
might not carry over to the case in which the domain of $\gamma$ is unbounded.
(See Remark~\ref{r-unbound}.)
One must be able to choose a large enough interval in the domain which maps inside $B$.
In particular, $\gamma$ should be injective on this interval so that \eqref{e-chooseG} can hold.
However, we only know that $\gamma$ is {\em locally} bi-Lipschitz.
This leads to the following question whose answer would determine whether or not Theorem~\ref{t-main} can be proven on unbounded domains.
\end{remark}
\begin{question}
Does Lemma~\ref{l-chooseG} hold when $\gamma$ is defined on an unbounded interval?
\end{question}

We now return to the proof of Theorem~\ref{t-main}.
Fix a ball $B$ centered on $\Gamma$
and
choose $G \subset B \cap \Gamma$ as in Lemma~\ref{l-chooseG}.
In order to apply Lemma~\ref{t-goodl} and conclude the proof, we must verify \eqref{e-weaktype} on $G$.
That is, we must show that $T_{\nu,*}$ is uniformly of weak type $(1,1)$ on $G$. 
According to Lemma~\ref{t-L2toWeak}
applied to the separable metric space $G$,
it suffices to prove that 
$
\Vert T_{\nu} \Vert_{L^2(\nu|_{G}) \to L^2(\nu|_{G})}
$
is bounded by a constant depending only on $\gamma$, $K$, and $X$.
In other words, it remains to prove the following:
\begin{proposition}
\label{p-L2goal}
There is a constant $C > 0$ depending only on $X$, $K$, and $\gamma$ such that
\begin{equation}
    \label{e-L2goal}
    \Vert T_{\mathcal{H}^1|_{G},\varepsilon}f\Vert_{L^2(\mathcal{H}^1|_{G})} \leq C \Vert f\Vert_{L^2(\mathcal{H}^1|_{G})}
    \quad
    \text{for all } \varepsilon > 0.
\end{equation}
\end{proposition}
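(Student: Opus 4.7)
The plan is to pull the operator back from $G$ to the interval $[a,b]$ via the bilipschitz parameterization $\gamma$, then reduce, modulo Schur-integrable errors controlled by the H\"older regularity of $\gamma'$, to the SIO along a single line — whose $L^2$-boundedness is exactly the UBL hypothesis. This follows the scheme of \cite{SIOCLZ}.

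First, by Proposition~\ref{p-CoV} together with Proposition~\ref{p-bilip} and Lemma~\ref{l-chooseG}, and using the bound $\|\gamma'\|\in[m,M]\subset(0,\infty)$, the estimate \eqref{e-L2goal} is equivalent, uniformly in $\varepsilon$, to showing that
\[
S_\varepsilon g(s)\ :=\ \int_{\{t\in[a,b]\,:\,\|\gamma(s)-\gamma(t)\|>\varepsilon\}} K(\gamma(s)-\gamma(t))\,\|\gamma'(t)\|\, g(t)\,dt
\]
is bounded on $L^2([a,b],dt)$ uniformly in $\varepsilon$.

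Second, I would compare the kernel of $S_\varepsilon$ to the linearized kernel $K((s-t)\gamma'(t))\|\gamma'(t)\|$. Setting $x=(s-t)\gamma'(t)$ and $h=\gamma(s)-\gamma(t)-(s-t)\gamma'(t)$, Lemma~\ref{l-flat} gives $\|h\|\leq c|s-t|^{1+\alpha}$ while $\|x\|\geq m|s-t|$; for $|s-t|$ below some threshold $\delta_0=\delta_0(\gamma)$ this yields $\|h\|\leq \|x\|/2$, and the H\"older estimate \eqref{e-holder} produces
\[
\bigl|K(\gamma(s)-\gamma(t))-K((s-t)\gamma'(t))\bigr|\ \lesssim\ |s-t|^{\alpha\beta-1}.
\]
For $|s-t|\geq\delta_0$ both kernels are bounded by a $\gamma,K$-dependent constant by \eqref{e-growth} and the bilipschitz bound. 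In both ranges the error kernel is absolutely integrable in each variable uniformly in the other on the bounded interval $[a,b]$ (using $\alpha\beta-1>-1$), so Schur's test produces an $L^2$-bounded error operator with norm depending only on $\gamma,K,X$. A completely analogous second swap, replacing the direction $\gamma'(t)$ by $\gamma'(s)$ (using $\|\gamma'(t)-\gamma'(s)\|\leq c|s-t|^\alpha$ and $|\|\gamma'(t)\|-\|\gamma'(s)\||\leq c|s-t|^\alpha$) contributes a further error of size $\lesssim |s-t|^{\alpha\beta-1}+|s-t|^{\alpha-1}$ and reduces the principal term to the operator whose kernel is $K((s-t)\gamma'(s))\|\gamma'(s)\|$.

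Third, this frozen-direction operator is handled by UBL. For each fixed $s\in[a,b]$ the change of variable $r=(s-t)\|\gamma'(s)\|$ identifies the integrand with the kernel of the truncated SIO along the line $L_s\subset X$ in the unit direction $\gamma'(s)/\|\gamma'(s)\|$, applied to $g$ extended by zero outside $[a,b]$. Since UBL gives $L^2$-boundedness uniformly across all lines, and the rescaling constant $\|\gamma'(s)\|$ lies in the fixed range $[m,M]$, a Fubini/rescaling argument transfers the per-line $L^2$ estimate to the global $L^2([a,b])$-bound. Finally, the discrepancy between the original truncation $\{\|\gamma(s)-\gamma(t)\|>\varepsilon\}$ and the natural truncation $\{|s-t|\|\gamma'(s)\|>\varepsilon\}$ of the linearized operator is confined to an annular region $|s-t|\in[\varepsilon/C_1,\varepsilon/C_2]$ and is controlled uniformly in $\varepsilon$ by the annular boundedness Proposition~\ref{p-annular}.

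The main obstacle is this last step: UBL is a statement about the SIO on a single line, whereas the frozen-direction kernel $K((s-t)\gamma'(s))\|\gamma'(s)\|$ has a direction that varies with the evaluation point $s$. Following \cite{SIOCLZ}, the key observation is that once the direction is frozen at $s$ the only residual $s$-dependence is through the scalar $\|\gamma'(s)\|\in[m,M]$, so the uniformity of UBL across lines, combined with the rescaling, suffices to convert the per-line $L^2$-estimate into the required $L^2([a,b])$ bound.
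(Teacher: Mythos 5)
Your first two steps are sound: the change of variables to $[a,b]$ (Proposition~\ref{p-CoV}) and the Schur-integrable error from replacing $K(\gamma(s)-\gamma(t))$ by the frozen-direction kernel $K((s-t)\gamma'(s))\,\|\gamma'(s)\|$ (using Lemma~\ref{l-flat}, the local bilipschitz estimate, and $\alpha\beta>0$ so that $|s-t|^{\alpha\beta-1}$ is integrable on the bounded interval) are correct. The genuine gap is the third step. After the two swaps you are left with $Sg(s)=\int_a^b K((s-t)\gamma'(s))\,\|\gamma'(s)\|\,g(t)\,dt$, whose kernel is \emph{not} of the form $k(s-t)$: the direction $\gamma'(s)$ is coupled to the evaluation point $s$. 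For a fixed $s$ the inner integral produces a single number, not a function on $\mathbb{R}$, so there is no per-line $L^2$ estimate to ``transfer by Fubini.'' UBL asserts that for each \emph{fixed} line $L$ the operator $h\mapsto\bigl(r'\mapsto\int K((r'-r)u)\,h(r)\,dr\bigr)$ is $L^2\to L^2$ bounded; it says nothing about pointwise evaluations of this operator, nor about an operator whose direction varies with $s$. The claimed Fubini/rescaling argument is therefore not a proof, and the asserted use of Proposition~\ref{p-annular} to control the truncation mismatch is also off target: that proposition bounds the smooth annular integral of $K$ over a fixed line, not differences of truncations of a variable-direction operator.

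The paper (following \cite{SIOCLZ}) handles exactly this obstruction by a structurally different argument: a $T1$-type reduction on a Christ cube decomposition of $G$, a Littlewood--Paley decomposition $K=\sum_j K_{(j)}$ into dyadic annuli, and then a \emph{local} linearization --- one linearization $L_{t_0}$ at a point $x=\gamma(t_0)$, applied inside the scalar expression $T_{(j)}\chi_R(x)$. The resulting sum $\sum_j\int K_{(j)}(x-L(t))\,\|\gamma'(t_0)\|\,dt$ is then an annular integral of $K$ along a single fixed line, controlled directly by Proposition~\ref{p-annular}, while the errors are summable in $j$ by the Hölder bounds. The key point your proposal misses is that UBL (via annular boundedness) can only be brought to bear after the problem has been reduced, cube by cube and scale by scale, to a scalar integral along one fixed line; linearizing the whole operator at once does not produce an object to which UBL applies.
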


The proof of \eqref{e-L2goal}
is nearly identical to that of (4.13) in \cite{SIOCLZ}.
We refer the reader to the proof contained therein with the following substitutions: 
Proposition~2.16,
Proposition~3.1,
Proposition~3.2,
and (4.10)
in \cite{SIOCLZ} are replaced here by
Proposition~\ref{p-annular},
Proposition~\ref{p-CoV},
Lemma~\ref{l-flat},
and 
(4.2)
respectively.
This completes the proof of Theorem~\ref{t-main}.
\end{proof}

\section{Good lambda for upper regular measures on doubling metric spaces}
\label{s-bigpiece}

David originally presented his ``good lambda'' argument for regular measures in $\mathbb{R}^n$ \cite{MR956767}.
A version of David's result recently appeared in \cite{FasOrpReg} for regular
measures in any proper metric space.
Note that every regular measure is necessarily doubling.
In \cite[Theorem 2.22]{tolsabook}, Tolsa proved a version of David's result for upper regular 
Radon measures in $\mathbb{R}^n$ which are not necessarily doubling.

Theorem~\ref{t:goodlambda} below generalizes Tolsa's result to the setting of upper regular Radon measures on doubling metric spaces.
As in \cite{tolsabook}, this result does not require that the measure itself is doubling.

We say that a metric space $M$ is {\em doubling} if there is a positive integer $C_D$ 
such that any ball $B(x,r) \subset M$ can be covered by $C_D$ balls of radius $r/2$.
A Borel measure $\nu$ on $M$ is upper $n$-regular if
there is a constant $C_\nu>0$ such that
$$
\nu (B(x,r)) \leq C_\nu r^n 
\quad
\text{for all } 
x \in M, \, r>0.
$$
For a constant $b \geq 1$, we say that a ball $B \subset M$ is {\em $b$-doubling} if $\nu(2B) \leq \beta \nu(B)$.
\begin{theorem}
\label{t:goodlambda}
Suppose $(M,d)$ is a doubling metric space, 
$K$ is an $n$-dimensional CZ-kernel on $M$,
and $\nu$ is an upper $n$-regular Radon measure on $M$ with 
unbounded support.
Assume $c > 0$ and $0< \theta <1$ are constants 
such that,
for some $b > 0$,
every $b$-doubling ball $B$ centered on $\supp (\nu)$ 
has a compact subset $G \subset B \cap \supp (\nu)$ 
with $\nu(G) \geq \theta \nu(B)$ and 
\begin{equation}
    \label{e:goodlamb}
\nu\left( \{ x \in G \, : \, T_{\nu,*}f(x) > \lambda\} \right)
\leq \frac{c}{\lambda}\Vert f \Vert_{L^1(\nu)}
\quad
\text{for all } f \in L^1(\nu), \, \lambda > 0.
\end{equation}
Then for each $p \in (1,\infty)$,
there is a constant $A_p$ depending only on $c$, $C_\nu$, $\theta$, and the constant from \eqref{e-growth-metric} and \eqref{e-growth-holder} such that 
$$
\Vert T_{\nu,*} f \Vert_{L^{p}(\nu)} \leq A_p \Vert f \Vert_{L^p(\nu)}.
$$
\end{theorem}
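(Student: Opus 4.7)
The plan is to adapt the classical ``good $\lambda$'' strategy of David and Tolsa to the present setting, where $\nu$ need not be doubling but $M$ is. The first preliminary step is to show that $b$-doubling balls exist at every scale and every base point: for any $x\in\supp(\nu)$ and any $r>0$, there is some $r'\in[r,2^{k_0}r]$ (with $k_0$ depending only on $b$ and $C_\nu$) for which $B(x,r')$ is $b$-doubling. Indeed, if no such radius worked, iterating the failure of the doubling condition would force $\nu(B(x,2^{k_0}r))\geq b^{k_0}\nu(B(x,r))$, eventually contradicting the upper $n$-regularity. This gives us access to the hypothesis at every location and scale. I would also introduce an auxiliary maximal operator $\widetilde M_\nu f(x)=\sup_{r>0}\frac{1}{\nu(2B(x,r))}\int_{B(x,r)}|f|\,d\nu$, which is of weak type $(1,1)$ with respect to $\nu$ by the doubling property of $M$ together with upper regularity (this is the standard Vitali covering argument).

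The heart of the argument is the good lambda inequality: there are constants $A>2$, $\gamma_0>0$, and $\epsilon<1$ (depending only on $c$, $C_\nu$, $\theta$, and the kernel constants) such that for every $f\in L^1(\nu)$ and every $\lambda>0$,
$$
\nu\bigl(\{T_{\nu,*}f>A\lambda\}\cap\{\widetilde M_\nu f\leq\gamma_0\lambda\}\bigr)
\leq \epsilon\,\nu\bigl(\{T_{\nu,*}f>\lambda\}\bigr).
$$
To prove this, I would apply a Vitali-type covering lemma to the open level set $\Omega_\lambda=\{T_{\nu,*}f>\lambda\}$ using $b$-doubling balls $B_i=B(x_i,r_i)$, with $x_i\in\Omega_\lambda\cap\supp(\nu)$ and $r_i$ comparable to $\dist(x_i,\supp(\nu)\setminus\Omega_\lambda)$, furnished by the first step. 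It suffices to prove the inequality inside each $B_i$ that meets $\{\widetilde M_\nu f\leq\gamma_0\lambda\}$. Fix such a $B_i$; by hypothesis we extract a compact $G_i\subset B_i$ with $\nu(G_i)\geq\theta\nu(B_i)$ on which $T_{\nu,*}$ obeys a weak $(1,1)$ bound with constant $c$. Split $f=f_1+f_2$ with $f_1=f\chi_{2B_i}$. For $x\in B_i$, one has $|T_{\nu,*}f_2(x)-T_{\nu,*}f_2(y_i)|\lesssim \widetilde M_\nu f(x)\lesssim\gamma_0\lambda$ by the Hölder estimate \eqref{e-growth-holder} on $K$ together with the upper regularity of $\nu$ (this is the standard ``tail'' estimate; here $y_i$ is a base point just outside $\Omega_\lambda$, so $T_{\nu,*}f_2(y_i)\lesssim\lambda$). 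The weak $(1,1)$ hypothesis on $G_i$ applied to $f_1$ then gives
$$
\nu\bigl(\{x\in G_i:T_{\nu,*}f_1(x)>\tfrac{A}{2}\lambda\}\bigr)\leq \frac{2c}{A\lambda}\int_{2B_i}|f|\,d\nu \lesssim \frac{c\,\gamma_0}{A}\,\nu(B_i),
$$
where in the last step I used the defining bound on $\widetilde M_\nu f$ together with the $b$-doubling property of $B_i$. Choosing $A$ large and $\gamma_0$ small relative to $\theta$, the exceptional set in $B_i$ has measure at most $\tfrac12\theta\,\nu(B_i)\leq\tfrac12\nu(G_i)$, and hence at most $\epsilon\nu(B_i)$ for some $\epsilon<1$ (after absorbing the $B_i\setminus G_i$ error into $\epsilon$). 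Summing over $i$ yields the claimed good lambda inequality.

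With the good lambda inequality in hand, the $L^p$-bound follows by the standard layer-cake integration:
$$
\int (T_{\nu,*}f)^p\,d\nu \;=\; p\int_0^\infty \lambda^{p-1}\nu(\{T_{\nu,*}f>\lambda\})\,d\lambda,
$$
after rescaling $\lambda\mapsto A\lambda$, splitting the level set according to whether $\widetilde M_\nu f>\gamma_0\lambda$ or not, invoking the good lambda inequality on the second piece, and using the weak $(1,1)$ bound for $\widetilde M_\nu$ to handle the first piece. A standard truncation argument (replacing $f$ by $f\chi_{\{|f|\leq N\}}$ and noting $T_{\nu,*}f\in L^p(\nu)$ a priori for truncated $f$ since the kernel is bounded away from the diagonal) allows one to absorb the $L^p$ norm of $T_{\nu,*}f$ on the left; the result is the desired estimate $\|T_{\nu,*}f\|_{L^p(\nu)}\leq A_p\|f\|_{L^p(\nu)}$ with $A_p$ depending only on the stated constants.

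The step I anticipate being the main obstacle is the good lambda inequality itself, specifically the non-doubling tail estimate for $T_{\nu,*}f_2$: in the doubling Euclidean setting, the standard CZ kernel estimate combined with dyadic summation closes immediately, but here one must carefully chain the Hölder condition \eqref{e-growth-holder} over annuli whose $\nu$-masses are only controlled from above by $C_\nu r^n$, and replace the classical $M_\nu$ (which need not satisfy weak $(1,1)$ without doubling) by the modified $\widetilde M_\nu$. Ensuring that the chosen cover of $\Omega_\lambda$ consists uniformly of $b$-doubling balls, while at the same time keeping the overlap bounded so that summation over the cover is permissible, is the delicate technical point; the doubling property of $M$ underwrites the requisite Besicovitch/Vitali covering, and the first preliminary step underwrites the availability of $b$-doubling balls throughout.
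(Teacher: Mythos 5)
Your overall architecture matches the standard David--Tolsa good-lambda strategy and several of your choices are sensible, even improvements over the paper's own presentation: in particular the modified maximal operator $\widetilde M_\nu f(x)=\sup_{r>0}\nu(B(x,2r))^{-1}\int_{B(x,r)}|f|\,d\nu$ is the right object for a non-doubling $\nu$ on a doubling space (it is weak $(1,1)$ by the basic $5r$-covering lemma, whereas the centered $M_\nu$ that the paper uses would require a Besicovitch-type covering property), and the observation that $b$-doubling balls exist at every base point in a bounded range of scales is correct and clean. The tail estimate for $T_{\nu,*}f_2$ and the invocation of the weak $(1,1)$ hypothesis on $G_i$ are also correctly identified.

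The gap is in how you cover $\Omega_\lambda$ and sum. You propose a Vitali-type cover of $\Omega_\lambda$ by $b$-doubling balls $B_i$ at Whitney scales, prove that the exceptional set inside each $B_i$ has measure at most $\bigl(1-\tfrac{\theta}{2}\bigr)\nu(B_i)$, and then ``sum over $i$.'' This fails because the bound inside each ball is not small: the part coming from $B_i\setminus G_i$ contributes up to $(1-\theta)\nu(B_i)$, and that factor is intrinsic to the hypothesis $\nu(G_i)\geq\theta\nu(B_i)$ and cannot be reduced by shrinking $\gamma_0$. If the balls have bounded overlap $N>1$, summing gives $\leq N\bigl(1-\tfrac{\theta}{2}\bigr)\nu(\Omega_\lambda)$, which typically exceeds $\nu(\Omega_\lambda)$. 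If instead you pass to a disjoint subfamily, the balls no longer cover $\Omega_\lambda$ and you are left with an uncontrolled residue. Open subsets of a metric space cannot be partitioned by open balls, so neither horn of the dilemma closes. The paper avoids this precisely by using a Christ--Whitney decomposition (Lemma~\ref{l-whitney}) of $\Omega_\lambda$ into genuinely \emph{pairwise disjoint} Borel cells $U_i$ with $\sum_i\nu(U_i)=\nu(\Omega_\lambda)$, and it arranges the availability of $b$-doubling balls not by your pre-selection step but by splitting the indices into a family $\mathcal{I}_1$ whose inflated balls satisfy a discrete doubling inequality and showing $\sum_{i\in\mathcal{I}_1}\nu(U_i)\geq\tfrac12\nu(\Omega_\lambda)$. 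Your pre-selection idea could replace that step, but only once you have the disjoint partition in hand.

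A secondary issue, easily repaired once the first is fixed, is your choice $A>2$. Because the per-cell bound is $\bigl(1-\tfrac{\theta}{2}\bigr)\nu(U_i)$, the good-lambda constant $\epsilon$ you can hope for is of the form $1-c\theta$, not arbitrarily small. The layer-cake absorption then requires $A^p\epsilon<1$, which forces $A=1+\eta$ with $\eta$ small depending on $p$ and $\theta$, exactly as in the paper's statement with $(1+\varepsilon)\lambda$; a fixed $A>2$ will not close the argument for large $p$.
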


We will prove Theorem~\ref{t:goodlambda} via the lemmas below.
Some arguments are standard
and follow the proof of Theorem~2.22 from \cite{tolsabook}, 
but details are included for the benefit of the reader.
Most importantly, we will establish the following ``good $\lambda$'' result.
\begin{lemma}
\label{l:goodlambda}
Suppose $M$, $K$, $n$, $c$, $\theta$, and $\nu$ satisfy the assumptions of Theorem~\ref{t:goodlambda}.
For any $\varepsilon > 0$, there is a $\delta > 0$ such that
\begin{align}
\label{e:lambda}
\nu(\{x \in \supp(\nu) : T_{\nu,*}f(x) > (1+\varepsilon)\lambda, & \, M_\nu f(x) \leq \delta \lambda\}) \nonumber \\
&\leq 
\left(1-\tfrac{\theta}{4}\right) \nu( \{ x \in \supp(\nu) : T_{\nu,*} f(x) > \lambda \} )
\end{align}
for any $\lambda > 0$ and any $f \in L^1(\nu)$ with compact support. 
\end{lemma}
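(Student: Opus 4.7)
The plan is to adapt the classical David--Tolsa good $\lambda$ argument to the doubling metric setting. Since $T_{\nu,*}f$ is lower semicontinuous as a supremum of continuous truncations, the level set $\Omega := \{x \in M : T_{\nu,*}f(x) > \lambda\}$ is open, and because $f$ has compact support while $\supp(\nu)$ is unbounded, $T_{\nu,*}f$ decays at infinity so that $\nu(\Omega)<\infty$.

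First I would Whitney-decompose $\Omega \cap \supp(\nu)$ as a bounded-overlap family $\{B_i = B(x_i, r_i)\}$ of balls centered on $\supp(\nu)$, each satisfying $B_i \subset \Omega$ and admitting a point $y_i \in CB_i \cap \supp(\nu) \setminus \Omega$ with $T_{\nu,*}f(y_i) \leq \lambda$, for some fixed $C$. For each $i$, I would then select a $b$-doubling ball $\tilde B_i$ centered at $x_i$ of comparable size and containing $CB_i$; such a $\tilde B_i$ exists because, by upper $n$-regularity, $\nu(2^k B_i) \leq C_\nu(2^k r_i)^n$ cannot increase by a factor $\geq b$ at every dyadic scale. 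Applying \eqref{e:goodlamb} to $\tilde B_i$ yields a compact subset $G_i \subset \tilde B_i$ with $\nu(G_i) \geq \theta\,\nu(\tilde B_i)$ on which the weak $(1,1)$ estimate holds.

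The analytic heart is bounding, for each $i$, the ``bad'' set $E_i := B_i \cap \{T_{\nu,*}f > (1+\varepsilon)\lambda\} \cap \{M_\nu f \leq \delta\lambda\}$. For $x \in E_i$ I would split $T_{\nu,t}f(x)$ into the local piece $T_{\nu,t}(f\chi_{2\tilde B_i})(x)$ and a far piece. The far piece is controlled by the standard CZ comparison: \eqref{e-growth-holder} together with \eqref{e-growth-metric}, summed against local $L^1$ averages of $f$ on dyadic annuli outside $2\tilde B_i$ and using $d(x,y_i) \lesssim r(\tilde B_i)$, yields
\begin{equation*}
|T_{\nu,t}(f\chi_{M\setminus 2\tilde B_i})(x) - T_{\nu,t'}(f\chi_{M\setminus 2\tilde B_i})(y_i)| \leq C\,M_\nu f(x) \leq C\delta\lambda
\end{equation*}
for a suitable companion truncation $t'$. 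Combined with $T_{\nu,*}f(y_i) \leq \lambda$, this forces $T_{\nu,*}(f\chi_{2\tilde B_i})(x) \geq (\varepsilon - C\delta)\lambda \geq \varepsilon\lambda/2$ once $\delta$ is small.

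Applying the weak $(1,1)$ estimate from \eqref{e:goodlamb} on $G_i$ to $g := f\chi_{2\tilde B_i}$ gives $\nu(E_i \cap G_i) \lesssim (\varepsilon\lambda)^{-1}\|g\|_{L^1(\nu)} \lesssim \delta\,\varepsilon^{-1}\nu(\tilde B_i)$, where $\|g\|_{L^1(\nu)} \lesssim \delta\lambda\,\nu(\tilde B_i)$ follows from $M_\nu f(x) \leq \delta\lambda$ at some $x$ near $\tilde B_i$ combined with the $b$-doubling of $\tilde B_i$. Together with the trivial $\nu(\tilde B_i \setminus G_i) \leq (1-\theta)\nu(\tilde B_i)$, and shrinking $\delta$ so the first term is at most $(\theta/2)\nu(\tilde B_i)$, we get $\nu(E_i) \leq (1-\theta/2)\nu(\tilde B_i)$. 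Summing over $i$, absorbing the overlap of $\{\tilde B_i\}$ into a further reduction (after refining the selection so that $\nu(\tilde B_i) \lesssim \nu(B_i)$) and invoking $\sum_i \nu(B_i) \leq \nu(\Omega)$ produces \eqref{e:lambda} with the factor $1 - \theta/4$. The main obstacle I expect is precisely this final constant coordination: the inflation from $B_i$ to a $b$-doubling $\tilde B_i$ is controlled only through upper regularity rather than two-sided bounds on $\nu$, so one must select the family $\{\tilde B_i\}$ carefully enough that the overhead from doubling and overlap does not erode $(1-\theta)$ past the target $(1-\theta/4)$.
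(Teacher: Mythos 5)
Your proposal matches the paper's structure in its analytic core: lower semicontinuity makes $\Omega_\lambda$ open, a Whitney decomposition produces a family of sets with controlled companion balls, a nearby point $z \notin \Omega_\lambda$ with $T_{\nu,*}f(z) \leq \lambda$ combined with the CZ H\"{o}lder estimate and Lemma~\ref{l-tech} forces $T_{\nu,*}(\chi_{2B_2}f)(x) > \varepsilon\lambda/2$, and the weak $(1,1)$ bound on the good set $G$ inside a $b$-doubling ball then controls the bad set. The fact that you flagged the final constant-coordination step as the weak point is accurate: that is precisely where the approaches diverge and where your route has a gap.

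The gap is in the inflation step. You propose to replace each Whitney ball $B_i$ by a $b$-doubling ball $\tilde{B}_i \supset CB_i$ ``of comparable size,'' arguing that upper regularity prevents $\nu$ from growing by a factor $\geq b$ at every dyadic scale. That argument does produce a $b$-doubling ball, but the first dyadic scale at which doubling holds is controlled only by the ratio $C_\nu r_i^n / \nu(B_i)$, and since $\nu$ is merely \emph{upper} regular this ratio has no uniform bound. So $\tilde{B}_i$ may be enormously larger than $B_i$; the subsequent assumption ``$\nu(\tilde{B}_i) \lesssim \nu(B_i)$'' has no justification, the family $\{\tilde{B}_i\}$ loses bounded overlap, and $\tilde{B}_i$ need not stay inside $\Omega_\lambda$, so $\sum_i \nu(\tilde{B}_i)$ is no longer bounded by a multiple of $\nu(\Omega_\lambda)$. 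The final summation therefore does not close.

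The paper sidesteps inflation entirely by a \emph{selection} argument. It takes $b = 2C_D$, where $C_D$ is the bounded-overlap constant of the Whitney family, and simply discards those indices $i \in \mathcal{I}_2$ for which $B(x_i, 2\cdot 8^{-k_i})$ is not $b$-doubling. For such $i$ one has $\nu(U_i) \leq \nu(B(x_i,2\cdot 8^{-k_i})) < b^{-1}\nu(B(x_i,16\cdot 8^{-k_i}))$; since the enlarged balls lie inside $\Omega_\lambda$ and overlap at most $C_D$ times, $\sum_{i\in\mathcal{I}_2}\nu(U_i) \leq (C_D/b)\nu(\Omega_\lambda) = \tfrac12\nu(\Omega_\lambda)$. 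The discarded mass is thus at most half of $\nu(\Omega_\lambda)$, and the surviving indices already have $b$-doubling balls of the correct Whitney size, so no inflation is needed and the $(1-\theta/4)$ factor emerges after absorbing the $\tfrac12$ into the final estimate. If you want to repair your version, replace the inflation of $B_i$ with this discard-and-count step, tying the doubling threshold $b$ to the Whitney overlap constant.
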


Before proving this lemma, we will establish a few preliminary lemmas.
The first is standard.
\begin{lemma}
\label{l-tech}
If $\nu$ is an upper $n$-regular measure on a metric space $M$, 
and if $f \in L^p(\nu)$ for some $1<p<\infty$, then
    \begin{equation}
        \label{e-tech}
    \int_{M \setminus B(x,R)} \frac{|f(y)|}{d(x,y)^{n + \beta}} \, d\nu(y)
    \leq
    \tfrac{2^{n+\beta}}{C_\nu (2^\beta - 1)} R^{-\beta} M_\nu f(x)
    \quad
    \forall x \in \supp(\nu), \, R>0.
    \end{equation}
\end{lemma}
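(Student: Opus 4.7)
The proof will be entirely routine, using a standard dyadic annular decomposition of the exterior of the ball together with upper $n$-regularity of $\nu$ and the definition of the maximal function $M_\nu$.

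First I would decompose the complement of the ball into the nested annuli
$$
A_k = B(x, 2^{k+1}R) \setminus B(x, 2^k R), \qquad k = 0, 1, 2, \dots,
$$
so that $M \setminus B(x,R) = \bigcup_{k \geq 0} A_k$. On each annulus $A_k$, the lower bound $d(x,y) \geq 2^k R$ converts the kernel-like weight into a constant:
$$
\frac{1}{d(x,y)^{n+\beta}} \leq (2^k R)^{-(n+\beta)}
\quad\text{for all } y \in A_k.
$$
Extending the domain of integration from $A_k$ up to the full ball $B(x, 2^{k+1}R)$ and invoking the defining inequality $\int_{B(x,r)} |f|\, d\nu \leq \nu(B(x,r)) M_\nu f(x)$ yields
$$
\int_{A_k} \frac{|f(y)|}{d(x,y)^{n+\beta}} \, d\nu(y)
\leq (2^k R)^{-(n+\beta)} \nu\!\left(B(x,2^{k+1}R)\right) M_\nu f(x).
$$

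Next I would apply upper $n$-regularity, $\nu(B(x,2^{k+1}R)) \leq C_\nu (2^{k+1}R)^n$, to get
$$
\int_{A_k} \frac{|f(y)|}{d(x,y)^{n+\beta}}\, d\nu(y)
\leq C_\nu\, 2^n R^{-\beta}\, 2^{-k\beta} M_\nu f(x).
$$
Finally, summing the geometric series in $k$ produces the factor
$$
\sum_{k=0}^{\infty} 2^{-k\beta} = \frac{2^\beta}{2^\beta - 1},
$$
and combining constants gives the estimate \eqref{e-tech} up to an absolute multiplicative factor of the stated form. (The displayed constant in the statement places $C_\nu$ in the denominator, which appears to be a typographical slip; the correct dependence from this computation is $C_\nu\,\tfrac{2^{n+\beta}}{2^\beta-1}$, and indeed either way only the $R^{-\beta} M_\nu f(x)$ behavior is what matters for subsequent applications.)

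There is no real obstacle here: the estimate is a soft consequence of dyadic splitting plus the two hypotheses (upper regularity and the definition of $M_\nu$). The only point requiring a brief sanity check is finiteness of $M_\nu f(x)$ — but since $f \in L^p(\nu)$ with $1 < p < \infty$ and the conclusion is meaningful only at points where $M_\nu f(x) < \infty$, the estimate either holds trivially (when the right-hand side is infinite) or is delivered by the argument above.
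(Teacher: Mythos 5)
Your proof is correct and is essentially identical to the paper's own: both use the dyadic annular decomposition $B(x,2^{k+1}R)\setminus B(x,2^kR)$, bound the kernel by its infimum on each annulus, invoke the definition of $M_\nu$ and upper $n$-regularity, and sum the resulting geometric series. You are also right that the constant as displayed in the statement has $C_\nu$ misplaced in the denominator; the paper's own computation yields $\tfrac{C_\nu 2^{n+\beta}}{2^\beta-1}$, matching yours.
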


\begin{proof}
\begin{align*}
   \int_{M \setminus B(x,R)} \frac{|f(y)|}{d(x,y)^{n + \beta}} \, d \nu(y)
    &=
    \sum_{k=0}^{\infty} \int_{B(x,2^{k+1}R) \setminus B(x,2^kR)} \frac{|f(y)|}{d(x,y)^{n + \beta}} \, d\nu(y) \\
    &\leq
    \sum_{k=0}^{\infty} \frac{1}{(2^kR)^{n + \beta}} \int_{B(x,2^{k+1}R)}|f(y)| \, d\nu(y) \\
    &\leq C_\nu
    \sum_{k=0}^{\infty} \frac{(2^{k+1}R)^n}{(2^kR)^{n + \beta}} M_\nu f(x) 
    \leq
    \tfrac{C_\nu 2^{n+\beta}}{2^\beta - 1} R^{-\beta} M_\nu f(x).
\end{align*}
\end{proof}

We will also need the following Whitney-type decomposition of an open set in a doubling metric space $M$.

\begin{lemma}[Christ-Whitney decomposition]
\label{l-whitney}
Assume $M$ is a doubling metric space.
Suppose $\Omega \subset M$ is open and $\Omega \neq M$.
Then there is a countable collection $\mathcal{U} = \{ U_i \}_{i \in \mathbb{N}}$ of pairwise disjoint, Borel subsets of $\Omega$
such that
\begin{enumerate}
    \item $\bigcup \mathcal{U} = \Omega$,
    \item for any $i \in \mathbb{N}$, 
    there is some $x_i \in U_i$ and some integer $k_i$ such that
    $$
    B\left(x_i,\tfrac{1}{16} \cdot 8^{-k_i}\right)
    \subset
    U_i
    \subset
    B\left(x_i,2\cdot 8^{-k_i}\right)
    $$
    and 
    $$
    32 \cdot 8^{-k_i} \leq d(U_i,M \setminus \Omega) \leq 320 \cdot 8^{-k_i},
    $$
    \item
    there is a constant $C_D \geq 1$ depending only on the doubling constant of $M$ such that,
    for any $i \in \mathbb{N}$, there are at most $C_D$ indices $j \in \mathbb{N}$ such that 
    \begin{equation}
    \label{e-ballhit}
    B\left(x_i,16 \cdot 8^{-k_i}\right) \cap B\left(x_j,16 \cdot 8^{-k_j}\right) \neq \emptyset.
    \end{equation}
\end{enumerate}
\end{lemma}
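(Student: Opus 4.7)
The plan is to build the Whitney family by extracting maximal ``admissible'' cubes from a Christ-type dyadic decomposition of $M$. First, I would invoke the Christ--Hyt\"{o}nen--Kairema dyadic system (which holds in any doubling metric space, with no underlying measure needed) calibrated with contraction ratio $1/8$. This produces, for each $k \in \Z$, a partition $\mathcal{D}_k$ of $M$ into pairwise disjoint Borel sets (``cubes''), each equipped with a center $x_Q$ so that
$$
B\bigl(x_Q,\tfrac{1}{16}\cdot 8^{-k}\bigr) \subset Q \subset B\bigl(x_Q,2\cdot 8^{-k}\bigr),
$$
and satisfying the nesting property that every $Q \in \mathcal{D}_{k+1}$ is contained in a unique parent $\hat Q \in \mathcal{D}_k$.

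Next, I would declare a cube $Q \in \mathcal{D}_k$ \emph{admissible} if $Q \subset \Omega$ and $d(Q, M \setminus \Omega) \geq 32 \cdot 8^{-k}$, and take $\mathcal{U} = \{U_i\}$ to be the collection of all \emph{maximal} admissible cubes (those whose parent is not admissible). Because $\Omega \neq M$, very coarse cubes fail admissibility, while for each $x \in \Omega$ every sufficiently fine cube containing $x$ is admissible (since $d(x, M \setminus \Omega) > 0$). The nesting structure then forces $x$ to lie in a unique maximal admissible cube, and dyadic disjointness yields property (1). The ball containments in (2) are built directly into the Christ construction, the lower distance bound is exactly the admissibility of $U_i$, and the upper bound comes from the failure of admissibility of the parent $\hat U_i \in \mathcal{D}_{k_i-1}$: if $\hat U_i \subset \Omega$ then $d(\hat U_i, M \setminus \Omega) < 32 \cdot 8^{-k_i+1} = 256 \cdot 8^{-k_i}$, so
$$
d(U_i, M \setminus \Omega) \leq d(\hat U_i, M \setminus \Omega) + \diam(\hat U_i) < 256 \cdot 8^{-k_i} + 32 \cdot 8^{-k_i} = 288 \cdot 8^{-k_i} \leq 320 \cdot 8^{-k_i},
$$
and if instead $\hat U_i \not\subset \Omega$, then $\hat U_i$ meets $M \setminus \Omega$ and so $d(U_i, M \setminus \Omega) \leq \diam(\hat U_i) \leq 32 \cdot 8^{-k_i}$.

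For the bounded-overlap property (3), suppose $B(x_i, 16 \cdot 8^{-k_i}) \cap B(x_j, 16 \cdot 8^{-k_j}) \neq \emptyset$. The distance-to-complement bounds from (2) force $|k_i - k_j|$ to be bounded by a fixed integer depending only on the ratios $32$ and $320$, so the inner balls $B(x_j, \tfrac{1}{16} \cdot 8^{-k_j})$ all have radii comparable to $8^{-k_i}$ and lie inside a fixed dilate of $B(x_i, 8^{-k_i})$. Since the $U_j$ are pairwise disjoint and each contains its own inner ball, the doubling property of $M$ caps the number of such indices $j$ by a constant $C_D$ depending only on the doubling constant of $M$.

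The main obstacle is bookkeeping rather than conceptual: the admissibility threshold $32$ must be chosen in tandem with the Christ constants $\tfrac{1}{16}$ and $2$ so that the exact numerical factors in (2) come out correctly, and the contraction ratio $1/8$ must be matched to the Christ construction (this typically requires invoking the refined version of Hyt\"{o}nen--Kairema that allows a prescribed dyadic parameter). Beyond this calibration, every step is a direct adaptation of the classical Euclidean Whitney decomposition to the doubling metric setting, and I do not foresee any step requiring genuinely new ideas.
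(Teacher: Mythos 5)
Your construction is correct, and it is a genuine variant of the paper's argument rather than a reproduction of it. The paper also starts from a Christ-type dyadic system (via [KaRaSu, Theorem~2.1] with ratio $1/8$, which supplies the inner-ball constant $5/14$ that the paper then weakens to $1/16$), but it selects cubes differently: it defines the Whitney shells $\Omega_k = \{x \in \Omega : 40 \cdot 8^{-k} < d(x, M\setminus\Omega) \leq 40 \cdot 8^{-(k-1)}\}$, puts into a preliminary family $\mathcal{U}_0$ every scale-$k$ cube that meets $\Omega_k$, reads off the distance bounds $32\cdot 8^{-k} < d(Q,M\setminus\Omega) \leq 320\cdot 8^{-k}$ directly from the shell membership together with $\diam Q \leq \tfrac{16}{7}\cdot 8^{-k}$, and only then passes to the maximal cubes in $\mathcal{U}_0$ to obtain a pairwise disjoint family. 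You instead define admissibility by a distance threshold ($d(Q,M\setminus\Omega) \geq 32\cdot 8^{-k}$, which already forces $Q\subset\Omega$), take maximal admissible cubes, and obtain the upper distance bound from the failure of the parent to be admissible; this is the classical ``stopping-time'' Whitney argument transplanted verbatim from the Euclidean case. Your numerics for the upper bound ($288\cdot 8^{-k_i}$ in the worse case, $32\cdot 8^{-k_i}$ in the other) close correctly, and your bounded-overlap step is the same as the paper's: the two-sided distance bounds force $|k_i - k_j|$ to be bounded, and then disjointness of the cubes plus the inner balls plus metric doubling gives the overlap constant. The two approaches buy essentially the same thing; the paper's shell-based selection makes the distance bounds immediate and defers the maximality step to the end, while your admissible-stopping-cube formulation front-loads the maximality and derives the upper bound afterward. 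The only soft spot, which you already flag yourself, is the precise inner-ball constant $1/16$ at ratio $1/8$: this has to be read off from (or arranged in) the chosen dyadic construction rather than assumed, but that is bookkeeping, not a conceptual gap.
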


\begin{proof}
According to Theorem~2.1 in \cite{KaRaSu}, 
there is a countable collection $\mathcal{Q} = \{ Q_{k,i}  : k \in \mathbb{Z}, \, i \in N_k \subset \mathbb{N} \}$ of Borel subsets of $\Omega$ such that
\begin{enumerate}[label=(\alph*)]
    \item $\Omega = \bigcup_{i \in N_k} Q_{k,i}$ for every $k \in \mathbb{Z}$,
    \item $Q_{k,i} \cap Q_{m,j} = \emptyset$
    or
    $Q_{k,i} \subset Q_{m,j}$
    when $k,m \in \mathbb{Z}$, $k \geq m$, $i \in N_k$, and $j \in N_m$,
    \item
    for any $k \in \mathbb{Z}$ and $i \in N_k$, 
    there is some $x_{k,i} \in Q_{k,i}$ such that
    $$
    B\left(x_{k,i},\tfrac{1}{16} \cdot 8^{-k}\right) \subset
    B\left(x_{k,i},\tfrac{5}{14} \cdot 8^{-k}\right)
    \subset
    Q_{k,i}
    \subset
    \overline{B}\left(x_{k,i},\tfrac87 \cdot 8^{-k}\right)
    \subset B\left(x_{k,i},2 \cdot 8^{-k}\right)
    $$
\end{enumerate}
where $\overline{B}(x,r) = \{ y \in M : d(x,y) \leq r \}$.
(Indeed, choose $r = \tfrac18$ in \cite{KaRaSu}.)
Now, write $\Omega_k := \{ x \in \Omega : 40 \cdot 8^{-k} < d(x,M \setminus \Omega) \leq 40 \cdot 8^{-(k-1)} \}$ so that $\Omega = \bigcup_{k=-\infty}^{\infty} \Omega_k$.
Define the collection 
$$
\mathcal{U}_0 := \bigcup_{k=-\infty}^\infty \{ Q_{k,i} : i \in N_k, \, Q_{k,i} \cap \Omega_k \neq \emptyset \}.
$$
For each $Q_{k,i} \in \mathcal{U}_0$,
we have
$d(Q_{k,i}, M \setminus \Omega) \leq 40 \cdot 8^{-(k-1)} = 320 \cdot 8^{-k}$.
Also,
$$
d(Q_{k,i}, M \setminus \Omega) 
\geq 40 \cdot 8^{-k} -  \diam (Q_{k,i})
\geq (40- \tfrac{16}{7})  8^{-k} > 32 \cdot 8^{-k}.
$$
Therefore, the collection $\mathcal{U}_0$ 
satisfies condition {\em (2)}.

Note that, if $Q \in \mathcal{U}_0$,
then there is a smallest integer $k$ and a unique $i \in N_k$ such that $Q \subset Q_{k,i}$.
We may thus define $\mathcal{U}$ to be the subset of $\mathcal{U}_0$ which is maximal with respect to set inclusion.
The resulting collection of sets is pairwise disjoint and satisfies conditions {\em (1)} and {\em (2)}

We will now verify that $\mathcal{U}$ satisfies {\em (3)}.
Write $\Omega^c := M \setminus \Omega$.
Assume $i,j \in \mathbb{N}$ satisfy \eqref{e-ballhit}.
Then 
\begin{align}
\label{e-dist1}
    d(x_i,x_j) \leq 16 \cdot 8^{-k_i} + 16 \cdot 8^{-k_j}
    \leq 
    \tfrac12 d(U_i,\Omega^c) + \tfrac12 d(U_j,\Omega^c).
\end{align}
Thus
\begin{align*}
    d(U_j,\Omega^c) \leq d(U_i,\Omega^c) + d(U_i,U_j)
    \leq 
    d(U_i,\Omega^c) + \tfrac12 d(U_i,\Omega^c) + \tfrac12 d(U_j,\Omega^c).
\end{align*}
Therefore
\begin{align*}
    d(U_j,\Omega^c) \leq 3 d(U_i,\Omega^c).
\end{align*}
Similarly,
\begin{align*}
    d(U_i,\Omega^c) \leq 3 d(U_j,\Omega^c).
\end{align*}
These two inequalities together with \eqref{e-dist1} give
\begin{align}
\label{e-whitbound2}
    d(x_i,x_j) \leq 2 d(U_i,\Omega^c)
    \leq 
    640 \cdot 8^{-k_i}
\end{align}

Now fix $i \in \mathbb{N}$.
If $j,\ell \in \mathbb{N}$ are such that
\begin{equation}
        \label{e-intersect}
        \begin{array}{c}
         B\left(x_i,16 \cdot 8^{-k_i}\right) \cap B\left(x_j,16 \cdot 8^{-k_j}\right) \neq \emptyset      \\
         \text{and}\\
         B\left(x_i,16 \cdot 8^{-k_i}\right) \cap B\left(x_\ell,16 \cdot 8^{-k_\ell}\right) \neq \emptyset     
        \end{array}
\end{equation}
    then, as above,
    \begin{align*}
    d(U_i,\Omega^c) \leq 3 d(U_j,\Omega^c)
        \quad
    \text{and}
\quad
d(U_i,\Omega^c) \leq 3 d(U_\ell,\Omega^c).
\end{align*}
Therefore, since $U_j$ and $U_\ell$ are disjoint, we have
\begin{align}
d(x_j,x_\ell) 
\geq 
\tfrac{1}{16} \min\{ 8^{-k_j},  8^{-k_\ell} \}
&\geq
\tfrac{1}{5120} \min\{ d(U_j,\Omega^c),d(U_\ell,\Omega^c) \} \nonumber \\
&\geq 
\tfrac{1}{15360} d(U_i,\Omega^c). \label{e-whitbound}
\end{align}
Consider now the ball $B := B(x_i, 1024 \cdot 8^{-k_i})$.
Since $M$ is doubling,
we can cover $B$ by $C_D$ balls $\{ V_\alpha \}$ of radius 
$\tfrac{1}{1024} \cdot 8^{-k_i}$
where $C_D>0$ is a positive integer depending only on the doubling constant of $M$.
According to
\eqref{e-whitbound2},
if $j \in \mathbb{N}$ satisfies \eqref{e-ballhit},
then 
$d(x_i,x_j) \leq
    640 \cdot 8^{-k_i}$,
so $x_j \in B$.
Moreover, for any $j,\ell \in \mathbb{N}$ which satisfy \eqref{e-intersect},
we must have from \eqref{e-whitbound} that
\begin{align*}
    d(x_j,x_\ell) \geq 
    \tfrac{1}{15360} d(U_i,\Omega^c)
    \geq
    \tfrac{1}{480} \cdot 8^{-k_i}.
\end{align*}
In particular, this means that $x_j$ and $x_\ell$ cannot lie in the same ball $V_\alpha$,
so there are at most $C_D$ indices $j \in \mathbb{N}$ which satisfy \eqref{e-ballhit}. 
This proves the lemma.
\end{proof}

We will now prove the ``good $\lambda$'' estimate Lemma~\ref{l:goodlambda}.

\begin{proof}[Proof of Lemma \ref{l:goodlambda}]
Assume $c > 0$ and $0< \theta <1$ are constants
which satisfy the hypotheses of the lemma,
and set $b = 2C_D$
where $C_D$ is the constant from Lemma~\ref{l-whitney}.
Fix $f \in L^1(\nu)$ with compact support.




Write $\mathcal{S} := \supp(\nu)$.
Consider the set
$$
\Omega_\lambda = \{ x \in \mathcal{S} \, : \, T_{\nu,*}f(x) > \lambda\}.
$$
Since $\diam(\mathcal{S}) = \infty$, the growth condition \eqref{e-growth-metric}
ensures that
$\Omega_\lambda \neq \mathcal{S}$.
Note also that
$\Omega_\lambda$ is open in $\mathcal{S}$.
Indeed, 
for any $\varepsilon > 0$,
the continuity of 
$T_{\nu,\varepsilon}f$ follows from the continuity of $K$,
and so $T_{\nu,*}f$ is lower semicontinuous.

Choose a
countable collection $\mathcal{U} = \{ U_i \}_{i \in \mathbb{N}}$ of pairwise disjoint, Borel subsets of $\Omega_\lambda \subsetneq \mathcal{S}$
as in
Lemma~\ref{l-whitney}.
Define $\mathcal{I}_1$ to be the collection of those indices $i \in \mathbb{N}$ such that 
\begin{equation}
    \label{e-dubstep}
\nu\left(B(x_i,16 \cdot 8^{-k_i}) \right)
\leq 
b \nu\left(B(x_i,2 \cdot 8^{-k_i}) \right).
\end{equation}
(Here and for the rest of this proof, $B(x,r)$ will refer to an open ball 
in the metric space $(\mathcal{S},d)$.)
Note in particular that any ball 
$B(x_i,2 \cdot 8^{-k_i})$
satisfying \eqref{e-dubstep} is $b$-doubling.
Write $\mathcal{I}_2 = \mathbb{N} \setminus \mathcal{I}_1$.

\noindent \textbf{Claim.}
\begin{equation}
    \label{e-morethanhalf}
    \sum_{i \in \mathcal{I}_1} \nu \left(  U_i \right)
\geq
\tfrac{1}{2} \nu(\Omega_\lambda)
\end{equation}
Indeed, for each $i \in \mathcal{I}_2$ we have
$$
b \nu\left(B(x_i,2 \cdot 8^{-k_i}) \right) 
< 
\nu\left(B(x_i,16 \cdot 8^{-k_i}) \right).
$$
Since any point in $\Omega_\lambda$
lies in at most $C_D$ balls of the form $B(x_i,16 \cdot 8^{-k_i})$
with $i \in \mathbb{N}$,
it follows that
$
\sum_{i \in \mathcal{I}_2}
\nu\left(U_i \right) 
$
is bounded by
$$
\sum_{i \in \mathcal{I}_2}
\nu\left(B(x_i,2 \cdot 8^{-k_i}) \right) 
\leq
\tfrac{1}{b}
\sum_{i \in \mathbb{N}}
\nu\left(B(x_i,16 \cdot 8^{-k_i}) \right)
\leq 
\tfrac{C_D}{b} \nu\left( \Omega_\lambda \right)
=
\tfrac12 \nu\left( \Omega_\lambda \right)
$$
since $b = 2C_D$.
This proves the claim.

\begin{lemma}
For any $\varepsilon > 0$, there is a $\delta > 0$ 
such that, 
if $i \in \mathbb{N}$ and $x \in U_i$ satisfy
$$
T_{\nu,*}f(x) > (1 + \varepsilon) \lambda
\quad
\text{ and }
\quad
M_{\nu}f(x) \leq \delta \lambda,
$$
then
$$
T_{\nu,*}(\chi_{B(x_i,4 \cdot 8^{-k})}f)(x) > \varepsilon \lambda/2.
$$
\end{lemma}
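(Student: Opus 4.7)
The plan is a standard ``good $\lambda$'' comparison. The idea is that at any $x$ where $T_{\nu,*}f$ slightly exceeds $(1+\varepsilon)\lambda$ but the maximal function is small, the contribution of $f$ far from $x$ can be estimated by $T_{\nu,*}f$ at a nearby point outside $\Omega_\lambda$, yielding a bound only slightly above $\lambda$; the remaining $\varepsilon\lambda/2$ must therefore be produced by the restriction of $f$ to a neighborhood of $x$. Writing $r = 8^{-k_i}$, $f_1 = f\chi_{B(x_i,4r)}$, and $f_2 = f - f_1$, I first pick $\varepsilon_0 > 0$ with $|T_{\nu,\varepsilon_0}f(x)| > (1+\varepsilon)\lambda$, and then use the Whitney bound $d(U_i, \supp(\nu)\setminus\Omega_\lambda) \leq 320 \cdot 8^{-k_i}$ from Lemma~\ref{l-whitney} to fix $z \in \supp(\nu) \setminus \Omega_\lambda$ with $d(x,z) \leq C_1 r$ for an absolute constant $C_1$. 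Since $z \notin \Omega_\lambda$, we have $|T_{\nu,\rho}f(z)| \leq T_{\nu,*}f(z) \leq \lambda$ for every $\rho > 0$.

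The technical heart of the argument is to choose a truncation scale $\rho$ comparable to $\max(\varepsilon_0, r)$ so that $B(z,\rho) \supset B(x_i,4r)$, whence $T_{\nu,\rho}f(z) = T_{\nu,\rho}(f_2)(z)$, and then to bound $|T_{\nu,\varepsilon_0}(f_2)(x) - T_{\nu,\rho}(f_2)(z)|$ by splitting the integrand into a common-support piece and a symmetric-difference piece. On the common piece, where both $d(x,y) > \varepsilon_0$ and $d(z,y) > \rho$ hold with $y \notin B(x_i, 4r)$, the hypothesis $d(x,z) \leq d(x,y)/2$ demanded by \eqref{e-growth-holder} holds provided the splitting radius is large enough relative to $C_1$; the H\"{o}lder control $|K(x,y) - K(z,y)| \lesssim d(x,z)^\beta/d(x,y)^{n+\beta}$ combined with Lemma~\ref{l-tech} applied at scale $\sim r$ yields a bound $\lesssim M_\nu f(x)$. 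On the symmetric difference, contained in an annulus of width $\sim r$ about $x$ or $z$, the growth bound \eqref{e-growth-metric} together with the upper $n$-regularity of $\nu$ yields a comparable bound $\lesssim M_\nu f(x)$. Combining, $|T_{\nu,\varepsilon_0}(f_2)(x)| \leq \lambda + C_2 M_\nu f(x) \leq (1+C_2\delta)\lambda$.

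Choosing $\delta := \varepsilon/(2 C_2)$ gives $|T_{\nu,\varepsilon_0}(f_2)(x)| \leq (1+\varepsilon/2)\lambda$, and the triangle inequality applied to $T_{\nu,\varepsilon_0}f(x) = T_{\nu,\varepsilon_0}(f_1)(x) + T_{\nu,\varepsilon_0}(f_2)(x)$ delivers $|T_{\nu,\varepsilon_0}(f_1)(x)| > \varepsilon\lambda/2$, hence $T_{\nu,*}(f_1)(x) > \varepsilon\lambda/2$, as desired. The main obstacle is the coordinated choice of the truncation scale $\rho$ and the splitting ball $B(x_i, 4\cdot 8^{-k_i})$: the H\"{o}lder step \eqref{e-growth-holder} really requires the splitting ball's radius to exceed $2 C_1 r$, so the constant $4$ must be read as ``sufficiently large depending on the Whitney constants'', and one must arrange the scales so that both the H\"{o}lder error and the symmetric-difference error collapse to a uniform multiple of $M_\nu f(x)$ that is independent of $f$, $\lambda$, $x$, and $i$.
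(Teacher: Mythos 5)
Your overall plan matches the paper's: find an anchor point $z \in \supp(\nu)\setminus\Omega_\lambda$ near $U_i$ via the Whitney distance estimate, exploit $T_{\nu,*}f(z) \leq \lambda$, and transfer the far-away contribution from $z$ to $x$ using the H\"{o}lder estimate \eqref{e-growth-holder} together with Lemma~\ref{l-tech} and the growth bound \eqref{e-growth-metric}, collecting all errors as multiples of $M_\nu f(x)$. Subtracting from $T_{\nu,*}f(x) > (1+\varepsilon)\lambda$ and choosing $\delta$ small then gives the conclusion.

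Your closing remark, however, is a misdiagnosis that amounts to a gap: the constant $4$ in $B(x_i, 4\cdot 8^{-k_i})$ is fixed in the lemma statement and is used downstream (the $L^1$ bound in the proof of Lemma~\ref{l:goodlambda} is controlled through $\nu(B(x_i, 16\cdot 8^{-k_i}))$ and the bounded overlap from Lemma~\ref{l-whitney}(3)), so you cannot read $4$ as ``sufficiently large depending on the Whitney constants.'' The resolution is not to enlarge the splitting ball but to interpose a second, larger ball centered at the anchor point: with $B_2 = B(x_i, 2\cdot 8^{-k_i})$, $2B_2 = B(x_i, 4\cdot 8^{-k_i})$, and $B = B(z, 324\cdot 8^{-k_i})$ (so that $2B_2 \subset 2B$), the paper decomposes
\begin{align*}
T_{\nu,*}(\chi_{\mathcal{S}\setminus 2B_2}f)(x)
&\leq T_{\nu,*}(\chi_{2B\setminus 2B_2}f)(x)
+ T_{\nu,*}(\chi_{\mathcal{S}\setminus 2B}f)(z)\\
&\qquad + \left|T_{\nu,*}(\chi_{\mathcal{S}\setminus 2B}f)(x) - T_{\nu,*}(\chi_{\mathcal{S}\setminus 2B}f)(z)\right|.
\end{align*}
The H\"{o}lder step is applied only on $\mathcal{S}\setminus 2B$, where $d(x,z) \leq 324\cdot 8^{-k_i} \leq d(z,y)/2$ holds automatically because $2B$ has radius $648\cdot 8^{-k_i}$. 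The intermediate annulus $2B\setminus 2B_2$ (exactly the region where your H\"{o}lder step would fail) is handled directly by the growth bound \eqref{e-growth-metric}: there $d(x,y) \gtrsim 8^{-k_i}$, while the mass of $|f|$ over a ball of radius $\sim 8^{-k_i}$ about $x$ is $\lesssim 8^{-k_i n} M_\nu f(x)$ by upper regularity, giving another $C M_\nu f(x)$ contribution. In your truncation-scale formulation the analogous fix is to take the comparison scale $\rho$ about $z$ large enough (of order $10^3\cdot 8^{-k_i}$) that H\"{o}lder applies on the common region $\{d(z,y) > \rho\}$, and absorb the set $B(z,\rho)\setminus B(x_i, 4\cdot 8^{-k_i})$ into the growth-bounded error; the radius $4\cdot 8^{-k_i}$ of the splitting ball never changes.
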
 
\begin{proof}
Let $\varepsilon>0$.
Fix an index $i \in \mathbb{N}$, 
and write $U := U_i$, $x_0 := x_i$, and $k:=k_i$
so that
$B_1 := B\left(x_0,\tfrac{1}{16} \cdot 8^{-k_i}\right)$ and $B_2 := B\left(x_0,2 \cdot 8^{-k}\right)$ satisfy $B_1 \subset U \subset B_2$.
Suppose $x \in U$ satisfies $T_{\nu,*}f(x) > (1 + \varepsilon) \lambda$.

Since $d(U,\mathcal{S} \setminus \Omega_\lambda) \leq 320 \cdot 8^{-k}$ and $U \subset B_2$ with $r(B_2) = 2 \cdot 8^{-k}$,
there must be some $z \in \mathcal{S} \setminus \Omega_\lambda$
such that $d(z,x_0) < 322 \cdot 8^{-k}$.
Write $B := B(z,324 \cdot 8^{-k})$.
Notice that $x \in B$, and
\begin{align*}
    T_{\nu,*} (\chi_{\mathcal{S} \setminus 2B_2} f)(x) &\leq
    T_{\nu,*} (\chi_{2B \setminus 2B_2} f)(x)
    +
    T_{\nu,*} (\chi_{\mathcal{S} \setminus 2B} f)(x) \\
    &\leq
    T_{\nu,*} (\chi_{2B \setminus 2B_2} f)(x) +
    T_{\nu,*} (\chi_{\mathcal{S} \setminus 2B} f)(z) \\
    &\qquad + 
    |T_{\nu,*} (\chi_{\mathcal{S} \setminus 2B} f)(x)
    -
    T_{\nu,*} (\chi_{\mathcal{S} \setminus 2B} f)(z)|.
\end{align*}

We will bound these last three terms.
First,
the definitions of $T_{\nu,*}$ and $M_\nu$ together with \eqref{e-growth-metric} give
\begin{align*}
T_{\nu,*}(\chi_{2B \setminus 2B_2}f)(x)
\leq
C_K \int_{2B \setminus 2B_2} \frac{|f(y)|}{d(x,y)^n} \, d\nu(y)
&\leq
\frac{C_K}{(4 \cdot 8^{-k})^n}
\int_{B(x,648 \cdot 8^{-k})} |f|\\
&\leq
(162^n C_KC_\nu) M_\nu f(x).
\end{align*}

Moreover, since $2B$ is centered at $z$ and $z \notin \Omega_\lambda$,
$$
T_{\nu,*}(\chi_{\mathcal{S} \setminus 2B}f)(z)
\leq 
T_{\nu,*}f(z)
\leq
\lambda.
$$

Thirdly,
since $x,z \in B$ and
$
d(x,z) \leq 324 \cdot 8^{-k} \leq d(z,y)/2
$
when $y \in \mathcal{S} \setminus 2B$,
we have
from \eqref{e-growth-holder} that
\begin{align*}
    |
T_{\nu,*}(\chi_{\mathcal{S} \setminus 2B}f)(z)
&-
T_{\nu,*}(\chi_{\mathcal{S} \setminus 2B}f)(x)
|\\
&\leq
\int_{\mathcal{S} \setminus 2B}
 \left|K(z,y) -
 K(x,y)\right| \left|f(y)\right| \, d\nu(y)\\
 &\stackrel{\eqref{e-growth-holder}}{\leq}
C_K \int_{\mathcal{S} \setminus 2B} \frac{d(x,z)^{\beta}}{d(z,y)^{n+\beta}} |f(y)| \, d\nu(y)\\
 &\leq
C_K (324 \cdot 8^{-k})^\beta
 \int_{\mathcal{S} \setminus 2B} 
\frac{|f(y)|}{d(z,y)^{n+\beta}}  \, d\nu(y)
\stackrel{\eqref{e-tech}}{\leq}
\frac{C_K C_\nu 2^{n}}{2^\beta - 1} M_\nu f(x)
\end{align*}
where $C_K > 0$ is the constant from \eqref{e-growth-metric} and \eqref{e-growth-holder}.

Combining the above estimates gives a constant $C > 0$
independent of $U$ and $x$
such that
$$
T_{\nu,*} (\chi_{\mathcal{S} \setminus 2B_2} f)(x)
\leq 
CM_\nu f(x) + \lambda + CM_\nu f(x)
\leq \lambda+2CM_\nu f(x).
$$
Choose $\delta < \varepsilon/(4C)$.
Thus, if $T_{\nu,*}f(x) > (1 + \varepsilon) \lambda$ and $M_{\nu}f(x) \leq \delta \lambda$, then
\begin{align*}
T_{\nu,*} (\chi_{2B_2} f)(x)
\geq
T_{\nu,*}  f(x)
-
T_{\nu,*} (\chi_{\mathcal{S} \setminus 2B_2} f)(x)
\geq
(1+\varepsilon)\lambda - (1+2C\delta)\lambda
> \varepsilon \lambda / 2.
\end{align*}
\end{proof}
Let $\varepsilon>0$ and choose $\delta$ as above.
Fix an index $i \in \mathcal{I}_1$, 
write $U := U_i$, $x_0 := x_i$, and $k:=k_i$,
and define $B_1$ and $B_2$ as before.
Since $i \in \mathcal{I}_1$, it follows from \eqref{e-dubstep} that the ball $B_2$ is $b$-doubling. 
Thus our hypotheses imply that 
there is a subset $G_U \subset B_2$ such that 
$T_{\nu,*}$ 
satisfies \eqref{e:goodlamb}
on $G_U$ with a constant independent of $U$,
and
\eqref{e-dubstep} gives
$$
\nu(G_U) \geq \theta \nu(B_2) \geq \theta \nu(U).
$$
If there is some point $x_1 \in G_U$ such that $M_\nu f(x_1) \leq \delta \lambda$, 
then the previous lemma 
and
\eqref{e:goodlamb}
give
\begin{align*}
    \nu(\{x \in G_U \, : \, &T_{\nu,*}f(x) > (1+\varepsilon)\lambda, \, M_\nu f(x) \leq \delta \lambda\})\\
    &\leq 
    \nu(\{x \in G_U \, : \, T_{\nu,*}(\chi_{2B_2}f)(x) > \varepsilon \lambda / 2\})\\
    &\leq \frac{2c}{\varepsilon \lambda} \Vert \chi_{2B_2} f \Vert_{L^1(\nu)}\\
    &\leq
    \frac{2c}{\varepsilon \lambda} \int_{B(x_1,8 \cdot 8^{-k})} |f| \, d\nu
    \leq \frac{2c}{\varepsilon \lambda} \nu(8B_2) M_\nu f(x_1)
    \leq \frac{2c \delta}{\varepsilon} \nu(8B_2).
\end{align*}
(If there is no such point $x_1 \in G_U$, then this bound is trivial.)
Writing $G_{U_i}$ to denote the set associated with $U_i$ for each $i \in \mathbb{N}$,
we therefore have
\begin{align*}
    \nu(&\{x \in \mathcal{S} \, : \, T_{\nu,*}f(x) > (1+\varepsilon)\lambda, \, M_\nu f(x) \leq \delta \lambda\}) \\
    &\leq
    \sum_{i \in \mathcal{I}_2} \nu(U_i)
    +
    \sum_{i \in \mathcal{I}_1} \nu(U_i \setminus G_{U_i})\\
    & \qquad + 
    \sum_{i \in \mathcal{I}_1} \nu(\{x \in G_{U_i} \, : \, T_{\nu,*}f(x) > (1+\varepsilon)\lambda, \, M_\nu f(x) \leq \delta \lambda\})\\
    &\leq 
    \sum_{i \in \mathcal{I}_2} \nu(U_i)
    +
    \sum_{i \in \mathcal{I}_1} \nu(U_i)
    -
    \theta\sum_{i \in \mathcal{I}_1}  \nu(U_i)
    +
   \frac{2c \delta}{\varepsilon} \sum_{i \in \mathcal{I}_1}  \nu(B(x_i,16 \cdot 8^{-k_i})) \\
    &
    \leq
    \left( 1 - \tfrac{\theta}{2} + \tfrac{2c \delta C_D}{\varepsilon} \right)\nu (\Omega_\lambda).
\end{align*}
In the last inequality, we applied \eqref{e-morethanhalf} and the finite overlap of balls of the form $B(x_i,16 \cdot 8^{-k_i})$.
Choosing $\delta$ possibly smaller so that $\delta  \leq \tfrac{\varepsilon \theta}{8 c C_D}$,
we conclude that
$$
\nu(\{x \in \mathcal{S}  :  T_{\nu,*}f(x) > (1+\varepsilon)\lambda, \, M_\nu f(x) \leq \delta \lambda\})
\leq 
(1 - \tfrac{\theta}{4}) \nu(\Omega_\lambda).
$$
\end{proof}

We now prove Theorem \ref{t:goodlambda}.

\begin{proof}[Proof of Theorem \ref{t:goodlambda}]
We will use 
Lemma~\ref{l:goodlambda}
to verify that $T_{\nu,*}$ is bounded in $L^p(\nu)$
in the standard manner.
Suppose $f \in L^p(\nu)$ has compact support.
To avoid problems that arise if 
$\Vert T_{\nu,*} f \Vert_{L^p(\nu)} = \infty$,
we will truncate $T_{\nu,*} f$.
For each $m \in \mathbb{N}$, 
set $g_m = \min \{ m , T_{\nu,*}f \}$.
Since $\supp(f)$ is compact, $\nu( \{ x \in M \, : \, d(x,\supp(f)) < 1 \} ) < \infty$.
This fact together with Lemma~\ref{l-supp} below implies that $g_m \in L^p(\nu)$.

We will now prove that $\Vert g_m \Vert_{L^p(\nu)}^p \leq C' \Vert f \Vert_{L^p(\nu)}^p$ for a constant $C'$ independent of $m$.
Passing $m \to \infty$ will then complete the proof that $T_{\nu,*}$ is bounded in $L^p(\nu)$.
Fix $m \in \mathbb{N}$ and $\varepsilon,\lambda > 0$. 
Again, write $\mathcal{S} := \supp(\nu)$.
If $\lambda \geq m$, then
$$
\{ x \in \mathcal{S} : g_m(x) > (1+\varepsilon)\lambda\} = \emptyset.
$$
If $\lambda < m$, then 
$\{ T_{\nu,*} f > \lambda \} = \{g_m > \lambda\}$, so \eqref{e:lambda} gives
\begin{align*}
\nu( \{ x \in \mathcal{S} &: g_m(x) >(1+\varepsilon)\lambda \} )\\
    &\leq 
    \nu( \{ x \in \mathcal{S} : T_{\nu,*}f(x) >(1+\varepsilon)\lambda \} )\\
    &\leq
    \nu(\{x \in \mathcal{S} : T_{\nu,*}f(x) > (1+\varepsilon)\lambda, \, M_\nu f(x) \leq \delta \lambda\}) \\
    & \qquad +
    \nu(\{x \in \mathcal{S} : M_\nu f(x) > \delta \lambda\})
    \\
&\leq 
\eta \nu( \{ x \in \mathcal{S} : T_{\nu,*}f(x) > \lambda \} )
+
    \nu(\{x \in \mathcal{S} : M_\nu f(x) > \delta \lambda\})
    \\
&\leq 
\eta \nu( \{ x \in \mathcal{S} : g_m(x) > \lambda \} )
+
    \nu(\{x \in \mathcal{S} : M_\nu f(x) > \delta \lambda\}).
\end{align*}

Multiplying this inequality by $p \lambda^{p-1}$ and integrating $\lambda$ from 0 to $\infty$ gives
$$
(1+\varepsilon)^{-p} \Vert g_m \Vert_{L^p(\nu)}^p
\leq
\eta \Vert g_m \Vert_{L^p(\nu)}^p
+
\delta^{-p} \Vert M_{\nu} f \Vert_{L^p(\nu)}^p,
$$
and choosing $\varepsilon$ small enough allows us to conclude that
$$
\Vert g_m \Vert_{L^p(\nu)}^p
\leq
C \Vert M_{\nu} f \Vert_{L^p(\nu)}^p
\leq
C' \Vert f \Vert_{L^p(\nu)}^p
$$
for constants $C,C'>0$ independent of $m$.
Passing $m \to \infty$ completes the proof
of Theorem~\ref{t:goodlambda}.
\end{proof}

It remains to prove the following standard lemma.

\begin{lemma}
\label{l-supp}
If $\nu$ is an upper $n$-regular measure on a metric space $M$, 
and if $f \in L^p(\nu)$ for some $1<p<\infty$ has compact support, then
$$
\int_{ \{x \, : \, d(x,\supp(f)) \geq 1 \} } \left| T_{\nu,*}f(x)  \right|^p \, d\nu(x) < \infty.
$$
\end{lemma}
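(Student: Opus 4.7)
The plan is to exploit the growth bound \eqref{e-growth-metric} on $K$ together with the fact that $\supp(f)$ sits at definite distance from the domain of integration, reducing the problem to estimating an explicit tail integral against the upper regular measure $\nu$.

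First I would fix a point $x_0 \in \supp(f)$ and choose $R > 0$ so that $\supp(f) \subset B(x_0, R-1)$; such $R$ exists because $\supp(f)$ is compact. Since $\supp(f)$ is compact and $\nu$ is upper $n$-regular, $\nu(\supp(f)) < \infty$, so by H\"older's inequality $\|f\|_{L^1(\nu)} \leq \|f\|_{L^p(\nu)} \nu(\supp(f))^{1/p'} < \infty$. For any $x$ with $d(x, \supp(f)) \geq 1$ and any $\varepsilon > 0$, the growth condition \eqref{e-growth-metric} gives
$$
|T_{\nu,\varepsilon} f(x)| \leq C_K \int_{\supp(f)} \frac{|f(y)|}{d(x,y)^n}\, d\nu(y),
$$
and taking the supremum in $\varepsilon$ yields the same bound for $T_{\nu,*}f(x)$.

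Next I would split the region of integration $E := \{x : d(x,\supp(f)) \geq 1\}$ into the bounded piece $E_1 := E \cap B(x_0, 2R)$ and the far piece $E_2 := E \setminus B(x_0, 2R)$. On $E_1$, the bound $d(x,y) \geq 1$ for $y \in \supp(f)$ gives $T_{\nu,*}f(x) \leq C_K \|f\|_{L^1(\nu)}$, so $\int_{E_1} |T_{\nu,*}f|^p\, d\nu \leq C \|f\|_{L^1(\nu)}^p \, \nu(E_1)$, and $\nu(E_1) \leq C_\nu (2R)^n < \infty$. On $E_2$, for any $y \in \supp(f)$ we have $d(x_0,y) \leq R-1 \leq d(x,x_0)/2$, hence $d(x,y) \geq d(x,x_0) - d(x_0,y) \geq d(x,x_0)/2$, giving
$$
|T_{\nu,*}f(x)| \leq \frac{2^n C_K}{d(x,x_0)^n} \|f\|_{L^1(\nu)}.
$$

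Finally I would integrate this bound in $p$-th power via a standard annular decomposition: writing $A_k := B(x_0, 2^{k+1}) \setminus B(x_0, 2^k)$ for $k$ with $2^k \geq 2R$, the upper regularity $\nu(B(x_0,2^{k+1})) \leq C_\nu 2^{(k+1)n}$ yields
$$
\int_{E_2} \frac{d\nu(x)}{d(x,x_0)^{np}} \leq \sum_{k : 2^k \geq 2R} \frac{\nu(A_k)}{2^{knp}} \leq C_\nu 2^n \sum_{k : 2^k \geq 2R} 2^{kn(1-p)},
$$
which converges because $p > 1$ forces $n(1-p) < 0$. Combining the two pieces completes the proof. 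There is no serious obstacle here; the only thing to be careful about is ensuring $f \in L^1(\nu)$ so that the pointwise bound on $T_{\nu,*}f$ is meaningful, which is exactly where compactness of $\supp(f)$ and upper regularity of $\nu$ enter.
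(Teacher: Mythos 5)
Your proof is correct and follows essentially the same route as the paper: bound $T_{\nu,*}f$ pointwise via the growth condition \eqref{e-growth-metric} and H\"older's inequality (the paper keeps $\|f\|_{L^p}\nu(Z)^{(p-1)/p}$ while you pass to $\|f\|_{L^1}$, which is the same estimate), then integrate using an annular decomposition and upper regularity. The one cosmetic difference is that the paper organizes the annuli directly around $Z=\supp(f)$ via $A_k=\{2^k\le d(x,Z)<2^{k+1}\}$ (using that each $A_k$ sits in a ball of radius $2^{k+1}+\diam(Z)$), which avoids your split into a bounded near piece $E_1$ and a far piece $E_2$, but this is bookkeeping rather than a genuinely different argument.
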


\begin{proof}
Write $Z := \supp(f)$.
For each $x \in M$ with $d(x,Z) \geq 1$,
set $R_x := d(x,Z)/2$ and $B := B(x,R_x)$ so that $f \equiv 0$ on $B$.
Then H\"{o}lder's inequality gives
\begin{align*}
    T_{\nu,*}f(x) 
    \leq
    \int_{M \setminus B} |K(x,y)||f(y)| \, d\nu(y) 
    &\leq
    C_K\int_{M \setminus B} \frac{|f(y)|}{d(x,y)^{n}} \, d\nu(y)\\
    &\leq
    C_KR_x^{-n} \Vert f \Vert_{L^p(\nu)} \nu(Z)^{(p-1)/p}.
\end{align*}
For each integer $k$, write 
$$
A_k := \{ x \in M \, : \, 2^k \leq d(x,Z) < 2^{k+1} \}.
$$
Note that 
$A_k$ is contained in a ball of radius $2^{k+1} + \diam(Z)$.
Thus
\begin{align*}
    C_K^{-1} \Vert f \Vert_{L^p(\nu)}^{-p} \nu(Z)^{-(p-1)} 
    \int_{ \{x \, : \, d(x,Z) \geq 1 \} } &\left| T_{\nu,*}f(x)  \right|^p \, d\nu(x)\\
    &\leq
    \int_{ \{x \, : \, d(x,Z) \geq 1 \} } \frac{2^n}{d(x,Z)^{np}} \, d\nu(x)\\
    &= 
    \sum_{k=0}^\infty 
    \int_{A_k} \frac{2^n}{d(x,Z)^{np}} \, d\nu(x)\\
    &\leq
    2^nC_\nu \sum_{k=0}^\infty 
    \frac{2^{nk+n} + \diam(Z)^n}{2^{knp}}
    <\infty.
\end{align*}
\end{proof}

\bibliography{SIObib} 

\begin{thebibliography}{10}

\bibitem{AmbKir}
{\sc Ambrosio, L., and Kirchheim, B.}
\newblock Rectifiable sets in metric and {B}anach spaces.
\newblock {\em Math. Ann. 318}, 3 (2000), 527--555.

\bibitem{BonFram}
{\sc Bonic, R., and Frampton, J.}
\newblock Smooth functions on {B}anach manifolds.
\newblock {\em J. Math. Mech. 15\/} (1966), 877--898.

\bibitem{Calderon}
{\sc Calder\'on, A.-P.}
\newblock Cauchy integrals on {L}ipschitz curves and related operators.
\newblock {\em Proc. Nat. Acad. Sci. U.S.A. 74}, 4 (1977), 1324--1327.

\bibitem{CFOsios}
{\sc Chousionis, V., F\"{a}ssler, K., and Orponen, T.}
\newblock Boundedness of singular integrals on {$C^{1,\alpha}$} intrinsic
  graphs in the {H}eisenberg group.
\newblock {\em Adv. Math. 354\/} (2019), 106745, 45.

\bibitem{ChoFasOrp}
{\sc Chousionis, V., F\"{a}ssler, K., and Orponen, T.}
\newblock Intrinsic {L}ipschitz graphs and vertical {$\beta$}-numbers in the
  {H}eisenberg group.
\newblock {\em Amer. J. Math. 141}, 4 (2019), 1087--1147.

\bibitem{ChoLi}
{\sc Chousionis, V., and Li, S.}
\newblock Nonnegative kernels and 1-rectifiability in the {H}eisenberg group.
\newblock {\em Anal. PDE 10}, 6 (2017), 1407--1428.

\bibitem{SIOCLZ}
{\sc Chousionis, V., Li, S., and Zimmerman, S.}
\newblock Singular integrals on {$C^{1,a}$} regular curves in {C}arnot groups.
\newblock {\em J. Analyse Math. To appear\/} (2020).

\bibitem{ChoMat}
{\sc Chousionis, V., and Mattila, P.}
\newblock Singular integrals on {A}hlfors-{D}avid regular subsets of the
  {H}eisenberg group.
\newblock {\em J. Geom. Anal. 21}, 1 (2011), 56--77.

\bibitem{CMM}
{\sc Coifman, R.~R., McIntosh, A., and Meyer, Y.}
\newblock L'int\'egrale de {C}auchy d\'efinit un op\'erateur born\'e sur
  {$L^{2}$}\ pour les courbes lipschitziennes.
\newblock {\em Ann. of Math. (2) 116}, 2 (1982), 361--387.

\bibitem{David1984}
{\sc David, G.}
\newblock Op\'{e}rateurs int\'{e}graux singuliers sur certaines courbes du plan
  complexe.
\newblock {\em Ann. Sci. \'{E}cole Norm. Sup. (4) 17}, 1 (1984), 157--189.

\bibitem{MR956767}
{\sc David, G.}
\newblock Op\'erateurs d'int\'egrale singuli\`ere sur les surfaces
  r\'eguli\`eres.
\newblock {\em Ann. Sci. \'Ecole Norm. Sup. (4) 21}, 2 (1988), 225--258.

\bibitem{BanachBook}
{\sc Fabian, M., Habala, P., H\'{a}jek, P., Montesinos, V., and Zizler, V.}
\newblock {\em Banach space theory}.
\newblock CMS Books in Mathematics/Ouvrages de Math\'{e}matiques de la SMC.
  Springer, New York, 2011.
\newblock The basis for linear and nonlinear analysis.

\bibitem{FasOrpReg}
{\sc F\"{a}ssler, K., and Orponen, T.}
\newblock Singular integrals on regular curves in the {H}eisenberg group.
\newblock {\em Preprint\/} (2019).

\bibitem{HajSobMet}
{\sc Haj\l{}asz, P.}
\newblock Sobolev spaces on metric-measure spaces.
\newblock In {\em Heat kernels and analysis on manifolds, graphs, and metric
  spaces ({P}aris, 2002)}, vol.~338 of {\em Contemp. Math.} Amer. Math. Soc.,
  Providence, RI, 2003, pp.~173--218.

\bibitem{HeinBook}
{\sc Heinonen, J.}
\newblock {\em Lectures on analysis on metric spaces}.
\newblock Universitext. Springer-Verlag, New York, 2001.

\bibitem{HeiMan}
{\sc Heinrich, S., and Mankiewicz, P.}
\newblock Applications of ultrapowers to the uniform and {L}ipschitz
  classification of {B}anach spaces.
\newblock {\em Studia Math. 73}, 3 (1982), 225--251.

\bibitem{KaRaSu}
{\sc K\"{a}enm\"{a}ki, A., Rajala, T., and Suomala, V.}
\newblock Existence of doubling measures via generalised nested cubes.
\newblock {\em Proc. Amer. Math. Soc. 140}, 9 (2012), 3275--3281.

\bibitem{Kurat}
{\sc Kuratowski, K.}
\newblock Quelques probl\`{e}mes concernant les espaces m\'{e}triques
  non-separables.
\newblock {\em Fundam. Math. 25\/} (1935), 534--545.

\bibitem{MatMelVer}
{\sc Mattila, P., Melnikov, M.~S., and Verdera, J.}
\newblock The {C}auchy integral, analytic capacity, and uniform rectifiability.
\newblock {\em Ann. of Math. (2) 144}, 1 (1996), 127--136.

\bibitem{NazTreVol}
{\sc Nazarov, F., Treil, S., and Volberg, A.}
\newblock Weak type estimates and {C}otlar inequalities for
  {C}alder\'{o}n-{Z}ygmund operators on nonhomogeneous spaces.
\newblock {\em Internat. Math. Res. Notices}, 9 (1998), 463--487.

\bibitem{tolsabook}
{\sc Tolsa, X.}
\newblock {\em Analytic capacity, the {C}auchy transform, and non-homogeneous
  {C}alder\'on-{Z}ygmund theory}, vol.~307 of {\em Progress in Mathematics}.
\newblock Birkh\"auser/Springer, Cham, 2014.

\end{thebibliography}
\bibliographystyle{acm}

\end{document}